\newtheorem{theorem}{Theorem}[section]
\newtheorem{lemma}[theorem]{Lemma}
\newtheorem{corollary}[theorem]{Corollary}
\theoremstyle{definition}
\newtheorem{example}[theorem]{Example}
\newtheorem{remark}[theorem]{Remark}
\numberwithin{equation}{section}
\newcounter{minutes}\setcounter{minutes}{\time}
\newcounter{hours}\setcounter{hours}{\time}
\begin{document}

\title[Nonlinear integral transforms]
{Preserving properties and pre-Schwarzian norms of nonlinear integral transforms}

\author[Shankey Kumar]{Shankey Kumar}
\author[S.K. Sahoo]{S.K. Sahoo$^*$}

\thanks{The authors would like to thank Professor Toshiyuki Sugawa for his useful remarks leading 
to some improvement in the paper.
The work of the first author is supported by CSIR, New Delhi (Grant No: 09/1022(0034)/2017-EMR-I)}


\keywords{Integral transform, Hornich operators, Ces\`{a}ro transform, 
Pre-Schwarzian norm, Univalent functions, Spirallike functions, Convex functions, 
Close-to-Convex functions\\
${}^{\mathbf{*}}$ {\tt Corresponding author}}
\subjclass[2000]{Primary: 30C55, 35A22; Secondary: 30C45, 35A23, 65R10}

\maketitle

\begin{center}
Discipline of Mathematics,
Indian Institute of Technology Indore,
Indore 453552, India\\
email: {\tt shankeygarg93@gmail.com}\\
email: {\tt swadesh.sahoo@iiti.ac.in}
\end{center}

\begin{abstract}
In this article, we study preserving properties of certain nonlinear 
integral transforms in some classical families of normalized analytic 
univalent functions defined in the unit disk. 
Also, we find sharp pre-Schwarzian norm estimates of such integrals.
\end{abstract}

\section{Introduction and Preliminaries}
Let $\mathcal{A}$ denote the class of all analytic functions $f$ in the 
unit disk $\mathbb{D}:=\{z\in\mathbb{C}:|z|<1\}$ 
with the normalization $f(0)=0$ and $f'(0)=1$. The subclass of $\mathcal{A}$ 
consisting of all univalent functions is denoted by $\mathcal{S}$. 
The notations $\mathcal{S}^*$ and $\mathcal{K}$ stand for the well-known classes 
of functions in $\mathcal{S}$ that are 
starlike (with respect to origin) and convex, respectively, see \cite{Duren83}. 
We also denote by $\mathcal{C}$, the class of close-to-convex functions in $\mathbb{D}$, 
i.e. functions $f\in \mathcal{A}$ satisfying
$$
{\rm Re}\,\bigg(e^{i\alpha}\frac{f'(z)}{g'(z)}\bigg)>0
$$
for some $g\in\mathcal{K}$ and a real number $\alpha\in(-\pi/2,\pi/2)$ 
(see \cite[Vol. 2, p.~2]{GMBook}).
Some natural generalizations of the classes $\mathcal{S}^*$ and $\mathcal{K}$ are 
available in the literature. In this paper, we consider the following generalizations:
\begin{equation}\label{2eq1.1}
\mathcal{S}^*_\alpha(\lambda)=\Bigg\lbrace f\in \mathcal{A}:{\rm Re}\,
\bigg(e^{i\alpha}\frac{zf'(z)}{f(z)}\bigg)
    >\lambda \cos \alpha\Bigg\rbrace,
\end{equation}
and
\begin{equation}\label{2eq1.2}
\mathcal{K}(\lambda)=\Bigg\lbrace f\in \mathcal{A}:{\rm Re}\,\bigg(\frac{zf''(z)}{f'(z)}+1\bigg)
   >\lambda \Bigg\rbrace,
\end{equation}
where $\alpha \in (-\pi/2,\pi/2)$ and $\lambda<1$. Note that the class 
$\mathcal{S}^*_\alpha(\lambda)$ is known as the class of $\alpha$-spirallike 
functions of order $\lambda$ and the class $\mathcal{K}(\lambda)$ denotes the 
class of convex functions of order $\lambda$. Recall that 
the class $\mathcal{S}^*_\alpha(\lambda)$, for $0\le \lambda<1$, is studied by 
several authors in difference perspective (see, for instance, \cite[p~93, Vol.~2]{GMBook} 
and \cite{Lib67,PW14}). Further, the class
$\mathcal{K}(\lambda)$, $-1/2\le \lambda<1$, is introduced, for instance, in \cite{LPQ16} 
and references therein. 
Originally, a slight modification of this class was first studied by T. Umezawa in 1952 \cite{Ume52}
by characterizing with the class of functions convex in one direction.
We can also easily observe that the class 
$\mathcal{K}(\lambda)$, $-1/2\le\lambda<1$, is
contained in the class $\mathcal{C}$ that follows from Kaplan's Theorem, see \cite[$\S$2.6]{Duren83}.
Note that $\mathcal{K}(\lambda)$, $0\le \lambda<1$, is the well-known 
class of normalized convex univalent functions.

Recall from the literature that 
$$
\mathcal{S}^*(\lambda):=\mathcal{S}^*_0(\lambda),~
\mathcal{S}^*:=\mathcal{S}^*(0)
~~\mbox{ and }~~
\mathcal{K}:=\mathcal{K}(0).
$$
Motivation to consider the class $\mathcal{S}^*(\lambda)$, $ \lambda<1$, comes, 
for instance, from the classes $\mathcal{S}^*(-1/2)$
and $\mathcal{K}(-1/2)$ already studied in the literature (see \cite[p.~66]{MM-Book} 
for some interesting results).
An interesting relation between the classes $\mathcal{S}^*$ and $\mathcal{K}$ is the 
classical Alexander Theorem which states that $f\in \mathcal{S}^*$ if and only if 
$J[f]\in \mathcal{K}$, where $J[f]$ denotes the Alexander transform of $f\in\mathcal{A}$ defined as  
$$
J[f](z)=\int_{0}^{z} \frac{f(w)}{w}dw.
$$
This operator is one of the main operators we consider in this paper.
We know that the class $\mathcal{S}$ does not preserve by the Alexander transform, 
see \cite[$\S8.4$]{Duren83}. This motivates us to study 
the classical classes of functions whose 
images lie on the class $\mathcal{S}$ under
the Alexander and related transforms considered 
in this paper.
We use the following notation concerning the Alexander operator $J$: 
\begin{equation}\label{JF}
J(\mathfrak{F})=\{J[f]:\,f\in\mathfrak{F}\}
\end{equation}
with $\mathfrak{F}:=\{f\in\mathcal{A}:f'(z)\neq 0\}$, the class of 
{\em locally univalent functions}. We say that a function $g\in J(\mathfrak{F})$ 
if and only if $g=J[f]$ for some $f\in\mathfrak{F}$.

The second integral operator that we study in this paper is 
$I_\gamma$, the {\em Hornich scalar multiplication operator} of $f\in\mathcal{A}$ defined by
\begin{equation}\label{HSOper}
I_\gamma[f](z)=(\gamma\star f)(z)=\int_{0}^{z}\{f'(w)\}^\gamma dw,
\end{equation}
where the branch of $\{f'(w)\}^\gamma=\exp(\gamma\log f'(z))$ is chosen so that 
$\{f'(0)\}^\gamma=1$. It clearly follows that $I_\alpha I_\beta = I_{\alpha \beta}$. 
In the sequel, the following definition due to Y. J. Kim and E. P. Merkes \cite{Kim72} 
is useful for our main results:
\begin{equation}\label{AF}
A(\mathfrak{F})=\{\gamma \in \mathbb{C}: I_\gamma(\mathfrak{F})\subset \mathcal{S}\}
\end{equation}
with $\mathfrak{F}$ as defined above. Here, the notation $I_\gamma(\mathfrak{F})$ is defined by
\begin{equation}\label{IgammaF}
I_\gamma(\mathfrak{F})=\{I_\gamma[f]:\,f\in \mathfrak{F}\}.
\end{equation}
We say that a function $g\in I_\gamma(\mathfrak{F})$ if and only if $g=I_\gamma[f]$ 
for some $f\in\mathfrak{F}$.

For simplicity in our further discussion, we introduce the notation 
$\mathcal{S}^*_\alpha:=\mathcal{S}^*_\alpha(0)$.
In \cite{Kim07}, Y. C. Kim and T. Sugawa find a condition on $\alpha$ such that 
$J[f],~f\in \mathcal{S}^*_\alpha$, is univalent with the help of the problem of 
determining the set $A(J(\mathcal{S}^*_\alpha))$,
where $J(\mathcal{S}^*_\alpha)$ is defined similar to the definition \eqref{JF}. 
Recall that the inclusion $\{\gamma:\,|\gamma|\le 1/2\}\subset A(\mathcal{K})$ was 
first proved by 
V. Singh and P. N. Chichra in \cite{Singh77} (see also \cite{Kimponnusamy04} and \cite{Merkes85}). 
Further, the inclusion $[0,3/2]\subset A(\mathcal{K})$ was due to M. Nunokawa \cite{Nunokawa69}. 
In continuation to this analysis, in 1985, E. P. Merkes proposed the conjecture that 
$\{\gamma\in\mathbb{C}:\,|\gamma-1|\le 1/2\}\subset A(\mathcal{K})$.
However, L. A. Aksent'ev and I. R. Nezhmetdinov
\cite{Aksent'ev82}
disproved the conjecture of E. P. Merkes by showing that 
\begin{equation}\label{AK}
A(\mathcal{K})=\{\gamma\in\mathbb{C}:\,|\gamma|\le 1/2\}\cup[1/2,3/2]
\end{equation}
(see also \cite{Kimponnusamy04}).

Next we observe that 
$$
(I_\gamma \circ J)[f](z)
=\int_{0}^{z} \bigg(\frac{f(w)}{w}\bigg)^\gamma dw=: J_\gamma[f](z).
$$
It is here appropriate to notice that $J_1[f]=J[f]$.
Then by the definitions \eqref{AF} and \eqref{IgammaF} we formulate
$$
A\big(J(\mathfrak{F})\big)=\{\gamma \in \mathbb{C}: J_\gamma(\mathfrak{F})\subset \mathcal{S}\}
~~\mbox{ and }~~
J_\gamma(\mathfrak{F})=(I_\gamma\circ J)(\mathfrak{F}).
$$ 
The operator $J_\gamma[f]$ was initially considered by Y. J. Kim and E. P. Merkes 
in \cite{Kim72}, and they showed that $J_\gamma(\mathcal{S})\subset \mathcal{S}$ 
for $|\gamma|\leq 1/4$, i.e. $A(J(\mathcal{S}))=\{\gamma\in\mathbb{C}:\,|\gamma|\le 1/4\}$. 
For the starlike family $\mathcal{S}^*$, V. Singh and P. N. Chichra in \cite{Singh77} 
proved that $A(J(\mathcal{S}^*))\supset \{\gamma\in\mathbb{C}:\,|\gamma|\leq 1/2\}$. 
However, as noted in \eqref{AK}, the complete range of $\gamma$ for $A(J(\mathcal{S}^*))$ 
was found by L. A. Aksent'ev and I. R. Nezhmetdinov, since 
$J(\mathcal{S}^*)=\mathcal{K}$. More interestingly, for a given $\alpha>0$,
Y. C. Kim, S. Ponnusamy, and T. Sugawa \cite{Kim04} could generate a subclass $\mathcal{F}$ 
of $\mathcal{A}$ such that   
$J_\gamma(\mathcal{F})\subset \mathcal{S}$ for all $\gamma\in\mathbb{C}$ with $|\gamma|\leq\alpha$.

Next we are interested to investigate the univalency and preservation property of certain 
classes of functions under the $\beta$-Ces\`{a}ro transform defined by
$$
C_\beta[f](z)=\int_{0}^{z}\frac{f(w)}{w(1-w)^{\beta}}dw, \quad\text{ for } \beta \geq 0,
$$
where $f$ is analytic in $\mathbb{D}$ and $f(0)=0$. Throughout this paper we consider 
$\beta\geq0$ unless it is specified. One can express the  $\beta$-Ces\`{a}ro transform in 
terms of the Hornich operations, i.e. of the form
$$
C_\beta[f](z)=\big(J[f]\oplus (\beta \star g)\big)(z),
$$
where $g(z)=-\log(1-z)\in\mathcal{K}$. Here, the symbol $\star$ is the Hornich scalar 
multiplication operator as in \eqref{HSOper}, whereas the symbol $\oplus$ denotes the 
Hornich addition operator defined by
$$
(f\oplus g)(z)= \int_{0}^{z} f'(w)g'(w)dw,
$$
for $f,g \in \mathfrak{F}$. The Hornich operations are widely used in the literature, 
see \cite{Ali18} and \cite{ponnusamy19}. Note that the $\beta$-Ces\`{a}ro transform 
reduces to the Alexander transform if we choose $\beta=0$ and to the Ces\`{a}ro 
transform \cite{Hartmann74} if we choose $\beta=1$. We use the notation $C[f]:=C_1[f]$ 
for the Ces\`{a}ro transform. For more information about the $\beta$-Ces\`{a}ro transform, 
see \cite{Shankey18}. Here it is appropriate to recall that F. W. Hartmann and 
T. H. MacGregor in the same paper \cite{Hartmann74}  provided examples of a univalent 
function and a starlike function whose images are not univalent and starlike, respectively, 
under the Ces\`{a}ro transform. 
Recently, S. Ponnusamy, S. K. Sahoo and T. Sugawa \cite{ponnusamy19} studied the 
univalency of the Ces\`{a}ro transform and even more general transforms of functions of 
bounded boundary rotations.
 
We organize the structure of our paper as follows:
throughout the paper we assume $\alpha \in (-\pi/2,\pi/2)$ and $\lambda<1$.
First we study the univalency of the Hornich scalar multiplication operator on the class 
$\mathcal{K}(\lambda)$. By
setting $\mathcal{S}(\lambda):=\bigcup_\alpha\,\mathcal{S}^*_\alpha(\lambda)$, we next 
compute the sets $A\big(J(\mathcal{S}_\alpha^*(\lambda))\big)$ and 
$A\big(J(\mathcal{S}(\lambda))\big)$.  Also, we find the values of $\beta$ for which 
$C_\beta(\mathcal{S}^*(\lambda))=\{C_\beta[f]:f\in\mathcal{S}^*(\lambda)\}\subset \mathcal{S}$, 
$C_\beta(\mathcal{S}^*(\lambda))\subset \mathcal{S}^*$,   
$C_\beta(\mathcal{K})=\{C_\beta[f]:f\in\mathcal{K}\}\subset \mathcal{K}$ and 
$C_\beta(\mathcal{C})=\{C_\beta[f]:f\in\mathcal{C}\}\subset \mathcal{C}$. We set 
$C(\mathcal{S}^*(\lambda))=C_1(\mathcal{S}^*(\lambda))$ when we talk about the 
classical Ces\`aro transform $C[f]$.
In this context, we also have an example of univalent function whose image is not 
univalent under the $\beta$-Ces\`{a}ro transform. 
Finally, we deal with pre-Schwarzian norm of some of the above integral transforms 
and as a result we could find an alternate way to show that the class  
$\mathcal{S}^*(\lambda)$ is not contained in $\mathcal{S}$ for $\lambda<0$.

\section{Preserving Properties}
It is here appropriate to recall that,
in one hand, due to J. A. Pfaltzgraff as shown in \cite[Corollary~1]{Pfa75} 
$I_\gamma(\mathcal{S})\subset \mathcal{S}$ for $|\gamma|\le 1/4$. On the other hand, 
W. C. Royster proved in \cite[Theorem~2]{Royster65} that for each number $\gamma\neq 1$ 
with $|\gamma|>1/3$,
there exists a function $f\in\mathcal{S}$ such that $I_\gamma[f]\not\in\mathcal{S}$ 
(see also \cite{Ali18,Kim04,KS06}). 
Also, recall from \eqref{AK} that 
$A(\mathcal{K})=\{\gamma\in\mathbb{C}:\,|\gamma|\le 1/2\}\cup[1/2,3/2]$.
However, as a result of our first main result stated below which generalizes the set 
$A(\mathcal{K})$ to the set $A(\mathcal{K}(\lambda))$, $\lambda<1$, we have a larger 
class of functions $\mathcal{K}(-1/2)$ than 
$\mathcal{K}$ for which
$$
A(\mathcal{K}(-1/2))=\left\{\gamma\in\mathbb{C}:\,|\gamma|
\le \frac{1}{3}\right\}\bigcup\Big[\frac{1}{3},1\Big].
$$
Note that the description of the whole set $A(\mathcal{S})$ is still open.
 
\begin{theorem}\label{2theorem2.1}
Let $\lambda<1$. Then we have 
$$
A(\mathcal{K(\lambda)})=\left\{\gamma\in\mathbb{C}:\,|\gamma|
\le \frac{1}{2(1-\lambda)}\right\}\bigcup \left[\frac{1}{2(1-\lambda)},\frac{3}{2(1-\lambda)}\right].
$$
\end{theorem}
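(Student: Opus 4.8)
The plan is to deduce the theorem from the known description \eqref{AK} of $A(\mathcal{K})$ by a change of variables carried out through the Hornich scalar multiplication operator. The first step is the observation that $\mathcal{K}(\lambda)$ is exactly the $I_{1-\lambda}$-image of $\mathcal{K}$. Indeed, for $g\in\mathcal{K}=\mathcal{K}(0)$ set $f=I_{1-\lambda}[g]$, so that $f'=(g')^{1-\lambda}$ and therefore $zf''(z)/f'(z)=(1-\lambda)\,zg''(z)/g'(z)$; this yields
$$
\frac{zf''(z)}{f'(z)}+1=(1-\lambda)\left(\frac{zg''(z)}{g'(z)}+1\right)+\lambda .
$$
Since $1-\lambda>0$ and $\mathrm{Re}\,(1+zg''/g')>0$, we get $\mathrm{Re}\,(1+zf''/f')>\lambda$, i.e. $f\in\mathcal{K}(\lambda)$. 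Conversely, any $f\in\mathcal{K}(\lambda)$ has $f'\neq 0$ on $\mathbb{D}$ (otherwise $1+zf''/f'$ would have a pole and its real part could not remain $>\lambda$), so $f\in\mathfrak{F}$ and $I_{1/(1-\lambda)}[f]$ is defined; the analogous computation shows $I_{1/(1-\lambda)}[f]\in\mathcal{K}$. By $I_\alpha I_\beta=I_{\alpha\beta}$ these two maps are mutually inverse, so $I_{1-\lambda}(\mathcal{K})=\mathcal{K}(\lambda)$.

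Next I would push the univalence question through this identification. For $\gamma\in\mathbb{C}$ and $g\in\mathcal{K}$ the composition law gives
$$
I_\gamma\big[I_{1-\lambda}[g]\big]=I_{(1-\lambda)\gamma}[g],
$$
so $I_\gamma[f]\in\mathcal{S}$ for every $f\in\mathcal{K}(\lambda)$ if and only if $I_{(1-\lambda)\gamma}[g]\in\mathcal{S}$ for every $g\in\mathcal{K}$. In the notation \eqref{AF} this reads: $\gamma\in A(\mathcal{K}(\lambda))$ if and only if $(1-\lambda)\gamma\in A(\mathcal{K})$, that is,
$$
A(\mathcal{K}(\lambda))=\frac{1}{1-\lambda}\,A(\mathcal{K}).
$$
Because $1-\lambda$ is a positive real number, multiplying the right-hand side of \eqref{AK} by $1/(1-\lambda)$ dilates the disk $\{|\gamma|\le 1/2\}$ into $\{|\gamma|\le 1/(2(1-\lambda))\}$ and the segment $[1/2,3/2]$ into $[1/(2(1-\lambda)),\,3/(2(1-\lambda))]$, which is exactly the asserted set. (As a sanity check, $\lambda=0$ returns \eqref{AK}, and $\lambda=-1/2$ gives $\{|\gamma|\le 1/3\}\cup[1/3,1]$, in agreement with the value recorded just before the theorem.)

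The argument is largely bookkeeping; there is no serious obstacle beyond \eqref{AK} itself, which carries all the depth. The points that need care are: that the branch conventions of \eqref{HSOper} make $I_\gamma[I_{1-\lambda}[g]]=I_{(1-\lambda)\gamma}[g]$ literally valid --- this is the meaning of $I_\alpha I_\beta=I_{\alpha\beta}$, and it is unproblematic here since $1-\lambda$ is real, so $(g')^{1-\lambda}$ is formed from the same branch of $\log g'$; that normalizations are preserved, which is immediate from $(I_\gamma[f])(0)=0$ and $(I_\gamma[f])'(0)=(f'(0))^\gamma=1$; and that for $f\in\mathfrak{F}$ the function $I_\gamma[f]$ is a perfectly good analytic map of $\mathbb{D}$ for every $\gamma$, so the only question ever at issue is its univalence, which is precisely what $A(\cdot)$ encodes.
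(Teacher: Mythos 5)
Your proof is correct and follows essentially the same route as the paper: identify $\mathcal{K}(\lambda)$ with $I_{1-\lambda}(\mathcal{K})$ (the paper cites Koepf for this, while you verify it directly), use $I_\gamma I_{1-\lambda}=I_{(1-\lambda)\gamma}$ to get $A(\mathcal{K}(\lambda))=\frac{1}{1-\lambda}A(\mathcal{K})$, and conclude from \eqref{AK}. Your version is somewhat more detailed on the bookkeeping (branch conventions, invertibility of the Hornich scaling), but the argument is the same.
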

\begin{proof}
As observed in \cite{Koe85}, $\mathcal{K}(\lambda)$ can be expanded in terms of 
the Hornich scalar multiplication: 
$(1-\lambda)\star \mathcal{K}=\{(1-\lambda)\star f : f\in \mathcal{K} \}$.
Then, for $f\in \mathcal{K}(\lambda)$, there exists a function $g \in \mathcal{K}$ 
such that $f(z)=((1-\lambda)\star g)(z)$.  This relation gives that 
$I_\gamma[f]=I_{(1-\lambda)\gamma}[g]$ for a function $g\in\mathcal{K}$. 
This concludes the proof by the help of the set $A(\mathcal{K})$.
\end{proof}

We now collect an important lemma, which is a generalization of a result of Y. C. Kim 
and T. Sugawa (see \cite[Lemma~4]{Kim07}), to conclude our next main result and its consequences.

\begin{lemma}\label{2lemma2.2}
For $-\pi/2<\alpha <\pi/2$ and $\lambda<1$, we have
$$
J(\mathcal{S}_{\alpha}^*(\lambda))=I_{e^{-i\alpha}\cos\alpha}(\mathcal{K}(\lambda)).
$$
\end{lemma}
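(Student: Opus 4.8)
The plan is to unwind the definitions of $J$ and $I_{e^{-i\alpha}\cos\alpha}$ and show the two image classes coincide by tracking what the defining inequality for $\mathcal{S}^*_\alpha(\lambda)$ becomes under the Alexander transform, and comparing it with what the defining inequality for $\mathcal{K}(\lambda)$ becomes under the Hornich scalar multiplication by $e^{-i\alpha}\cos\alpha$. Concretely, suppose $f\in\mathcal{S}^*_\alpha(\lambda)$ and set $F=J[f]$, so $F'(z)=f(z)/z$ and hence $zF''(z)/F'(z)+1=zf'(z)/f(z)$. The membership condition $\mathrm{Re}\,\big(e^{i\alpha} zf'(z)/f(z)\big)>\lambda\cos\alpha$ therefore says exactly that $\mathrm{Re}\,\big(e^{i\alpha}(1+zF''(z)/F'(z))\big)>\lambda\cos\alpha$. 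The task is to recognize the right-hand side of the claimed identity in terms of this.

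Next I would analyze $I_{e^{-i\alpha}\cos\alpha}(\mathcal{K}(\lambda))$. A function $G$ lies in this class iff $G=I_{e^{-i\alpha}\cos\alpha}[h]$ for some $h\in\mathcal{K}(\lambda)$, i.e. $G'(z)=\{h'(z)\}^{e^{-i\alpha}\cos\alpha}$ with the branch normalized at $0$. Taking logarithmic derivatives, $zG''(z)/G'(z)=e^{-i\alpha}\cos\alpha\cdot zh''(z)/h'(z)$, so $1+zG''(z)/G'(z)=1+e^{-i\alpha}\cos\alpha\,(p(z)-1)$ where $p(z):=1+zh''(z)/h'(z)$ satisfies $\mathrm{Re}\,p>\lambda$ and $p(0)=1$. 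The heart of the matter is the elementary fact that for $\alpha\in(-\pi/2,\pi/2)$ and an analytic $p$ with $p(0)=1$, the condition $\mathrm{Re}\,p(z)>\lambda$ is equivalent to $\mathrm{Re}\,\big(e^{i\alpha}(1+e^{-i\alpha}\cos\alpha\,(p(z)-1))\big)>\lambda\cos\alpha$; indeed the left side equals $\cos\alpha+\cos\alpha\,\mathrm{Re}(p(z)-1)=\cos\alpha\,\mathrm{Re}\,p(z)$, so the inequalities are literally the same after dividing by the positive number $\cos\alpha$. Thus $G\in I_{e^{-i\alpha}\cos\alpha}(\mathcal{K}(\lambda))$ iff $\mathrm{Re}\,\big(e^{i\alpha}(1+zG''(z)/G'(z))\big)>\lambda\cos\alpha$, which is precisely the condition we obtained for $F=J[f]$ above.

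To finish, I would argue both inclusions using this common characterization, being careful about normalizations and the choice of branches. For $J(\mathcal{S}^*_\alpha(\lambda))\subset I_{e^{-i\alpha}\cos\alpha}(\mathcal{K}(\lambda))$: given $f\in\mathcal{S}^*_\alpha(\lambda)$ and $F=J[f]$, the computation shows $F$ satisfies the spiral-type convexity inequality; then defining $h$ by $h'(z)=\{F'(z)\}^{e^{i\alpha}/\cos\alpha}$ (the inverse Hornich multiplier, with $h(0)=0,h'(0)=1$) one checks via the branch-normalization that $h\in\mathcal{K}(\lambda)$ and $I_{e^{-i\alpha}\cos\alpha}[h]=F$, using $I_\beta I_{1/\beta}=\mathrm{id}$ on $\mathfrak{F}$ as recorded in the excerpt. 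Conversely, given $G=I_{e^{-i\alpha}\cos\alpha}[h]$ with $h\in\mathcal{K}(\lambda)$, set $f(z)=zG'(z)$; then $f\in\mathcal{A}$, $J[f]=G$, and the inequality for $G$ translates back to $\mathrm{Re}\,(e^{i\alpha}zf'(z)/f(z))>\lambda\cos\alpha$, so $f\in\mathcal{S}^*_\alpha(\lambda)$. The main obstacle — really the only delicate point — is bookkeeping the branches of the powers so that all the normalizations $f(0)=0$, $f'(0)=1$ and $\{f'(0)\}^\gamma=1$ are consistent and the log-derivative identities hold globally on $\mathbb{D}$ rather than just formally; once that is handled the equivalence of the two inequalities is immediate, and this is exactly the generalization of \cite[Lemma~4]{Kim07} from $\lambda=0$ to general $\lambda<1$.
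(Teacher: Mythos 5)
Your proposal is correct and follows essentially the same route as the paper: both proofs reduce the statement to the logarithmic-derivative identity showing that $\mathrm{Re}\,\bigl(e^{i\alpha}(1+zF''(z)/F'(z))\bigr)>\lambda\cos\alpha$ characterizes both image classes, with the auxiliary function $p$ (your $p(z)=1+zh''(z)/h'(z)$ being exactly the paper's $p(z)=\frac{1}{\cos\alpha}\bigl[e^{i\alpha}(1+zf''(z)/f'(z))-i\sin\alpha\bigr]$) and the Hornich power $h'=\{F'\}^{e^{i\alpha}/\cos\alpha}$ realizing the required $k\in\mathcal{K}(\lambda)$. Your explicit attention to branch normalization is a slightly more careful rendering of the paper's ``backward process,'' but the argument is the same.
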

\begin{proof}
Let $f\in J(\mathcal{S}_{\alpha}^*(\lambda))$. We write
$$
\frac{1}{\cos\alpha}\Bigg[e^{i\alpha}\bigg(\frac{zf''(z)}{f'(z)}+1\bigg)-i \sin\alpha\Bigg] =p(z),
$$
where $p$ is an analytic function in $|z|<1$. Clearly, $p(0)=1$ and ${\rm Re}\, p(z)>\lambda$.

If we take $k\in \mathcal{K}(\lambda)$ such that $1+zk''(z)/k'(z)=p(z)$ then we obtain
$$
\frac{f''(z)}{f'(z)}=e^{-i\alpha}\cos\alpha \frac{k''(z)}{k'(z)},
$$
which yields $f=I_{e^{-i\alpha}\cos \alpha}[k]$. This follows that 
$J(\mathcal{S}_{\alpha}^*(\lambda))\subset I_{e^{-i\alpha}\cos\alpha}(\mathcal{K}(\lambda))$. 
If we take the backward process, then we obtain the reverse inclusion 
$J(\mathcal{S}_{\alpha}^*(\lambda))\supset I_{e^{-i\alpha}\cos\alpha}(\mathcal{K}(\lambda))$. 
The desired result is thus obtained.
\end{proof}

For $z,w\in\mathbb{C}$, we denote by $[z,w]$ for
the line segment joining $z$ and $w$.
An immediate consequence of Theorem~\ref{2theorem2.1} and Lemma \ref{2lemma2.2} leads to 
the following theorem:

\begin{theorem}\label{2theorem2.3}
For $-\pi/2<\alpha<\pi/2$ and $\lambda<1$, we have
$$
A\big(J(\mathcal{S}_{\alpha}^*(\lambda))\big)=\bigg\{\gamma\in\mathbb{C}:\, |\gamma|
\leq\frac{1}{2(1-\lambda)\cos\alpha}\bigg\} \bigcup \bigg[\frac{e^{i\alpha}}{2(1-\lambda)\cos\alpha},
\frac{3e^{i\alpha}}{2(1-\lambda)\cos\alpha}\bigg].
$$	
\end{theorem}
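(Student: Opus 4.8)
The plan is to combine Lemma~\ref{2lemma2.2} with Theorem~\ref{2theorem2.1}, tracking how the Hornich scalar multiplication $I_\gamma$ interacts with the operator $I_{e^{-i\alpha}\cos\alpha}$. First I would recall from the definition \eqref{AF} that $\gamma\in A\big(J(\mathcal{S}_\alpha^*(\lambda))\big)$ means precisely $I_\gamma\big(J(\mathcal{S}_\alpha^*(\lambda))\big)\subset\mathcal{S}$. By Lemma~\ref{2lemma2.2} this is the same as asking $I_\gamma\big(I_{e^{-i\alpha}\cos\alpha}(\mathcal{K}(\lambda))\big)\subset\mathcal{S}$. Now I would invoke the composition rule $I_\alpha I_\beta=I_{\alpha\beta}$ noted after \eqref{HSOper}, which gives $I_\gamma\circ I_{e^{-i\alpha}\cos\alpha}=I_{\gamma e^{-i\alpha}\cos\alpha}$. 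Hence $\gamma\in A\big(J(\mathcal{S}_\alpha^*(\lambda))\big)$ if and only if $I_{\gamma e^{-i\alpha}\cos\alpha}(\mathcal{K}(\lambda))\subset\mathcal{S}$, i.e. if and only if $\gamma e^{-i\alpha}\cos\alpha\in A(\mathcal{K}(\lambda))$.

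The second step is to substitute the explicit description of $A(\mathcal{K}(\lambda))$ from Theorem~\ref{2theorem2.1}, namely $A(\mathcal{K}(\lambda))=\{\mu\in\mathbb{C}:|\mu|\le \tfrac{1}{2(1-\lambda)}\}\cup[\tfrac{1}{2(1-\lambda)},\tfrac{3}{2(1-\lambda)}]$. So I need to solve $\gamma e^{-i\alpha}\cos\alpha\in A(\mathcal{K}(\lambda))$ for $\gamma$, i.e. describe the set $\{\gamma:\gamma e^{-i\alpha}\cos\alpha\in A(\mathcal{K}(\lambda))\}$. Writing $\mu=\gamma e^{-i\alpha}\cos\alpha$, we have $\gamma=\mu e^{i\alpha}/\cos\alpha$, so the preimage is obtained by dividing $A(\mathcal{K}(\lambda))$ by $\cos\alpha$ (a positive real since $\alpha\in(-\pi/2,\pi/2)$) and then rotating by $e^{i\alpha}$. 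The disk $\{|\mu|\le\tfrac{1}{2(1-\lambda)}\}$ maps to the disk $\{|\gamma|\le\tfrac{1}{2(1-\lambda)\cos\alpha}\}$ since rotation preserves the modulus; the real segment $[\tfrac{1}{2(1-\lambda)},\tfrac{3}{2(1-\lambda)}]$ scales to $[\tfrac{1}{2(1-\lambda)\cos\alpha},\tfrac{3}{2(1-\lambda)\cos\alpha}]$ and then rotates to the segment $[\tfrac{e^{i\alpha}}{2(1-\lambda)\cos\alpha},\tfrac{3e^{i\alpha}}{2(1-\lambda)\cos\alpha}]$. Taking the union gives exactly the claimed formula.

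There is essentially no hard analytic obstacle here; the theorem is a formal corollary. The one point requiring a little care is the direction of the composition: one must check that $I_\gamma\circ J$ restricted to $\mathcal{S}_\alpha^*(\lambda)$ genuinely factors as $I_{\gamma e^{-i\alpha}\cos\alpha}\circ(\text{something bijective onto }\mathcal{K}(\lambda))$, so that the inclusion into $\mathcal{S}$ transfers in both directions. This is exactly what Lemma~\ref{2lemma2.2} provides: it identifies the \emph{set} $J(\mathcal{S}_\alpha^*(\lambda))$ with the \emph{set} $I_{e^{-i\alpha}\cos\alpha}(\mathcal{K}(\lambda))$, so the "if and only if" is legitimate and no extra univalence argument is needed. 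I would also remark in passing on the geometric content: for real $\alpha$ the extra segment in $A(\mathcal{K}(\lambda))$ is tilted off the positive real axis by the angle $\alpha$, which is the new phenomenon compared with Theorem~\ref{2theorem2.1}. With these observations the proof reduces to the two displayed set identities above and can be written in a few lines.
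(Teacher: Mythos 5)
Your proposal is correct and follows essentially the same route as the paper: apply Lemma~\ref{2lemma2.2}, use $I_aI_b=I_{ab}$ to reduce membership in $A\big(J(\mathcal{S}_\alpha^*(\lambda))\big)$ to the condition $\gamma e^{-i\alpha}\cos\alpha\in A(\mathcal{K}(\lambda))$, and then invoke Theorem~\ref{2theorem2.1}. Your explicit computation of the image of the disk and the segment under $\mu\mapsto \mu e^{i\alpha}/\cos\alpha$ just spells out the final step that the paper leaves implicit.
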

\begin{proof} By using Lemma \ref{2lemma2.2} and the property $I_a I_b= I_{ab}$, 
for $a,b\in\mathbb{C}$, we have
$$
I_\gamma\big(J(\mathcal{S}_{\alpha}^*(\lambda))\big)
=I_\gamma I_{e^{-i\alpha}\cos\alpha}(\mathcal{K}(\lambda))
=I_{\gamma e^{-i\alpha}\cos\alpha}(\mathcal{K}(\lambda)).
$$
Therefore, $\gamma\in A\big(J(\mathcal{S}_{\alpha}^*(\lambda))\big)$ 
if and only if $\gamma e^{-i\alpha}\cos\alpha \in A\big(\mathcal{K}(\lambda)\big)$. 
Now we are able to conclude the proof by Theorem \ref{2theorem2.1}.
\end{proof}

We remark that the special choice $\lambda=0$ takes Theorem~\ref{2theorem2.3} 
to \cite[Theorem~3]{Kim07}.

By the definition of $\mathcal{S}(\lambda)$, we have 
$$
A\big(J(\mathcal{S}(\lambda))\big)=\underset{\alpha}{\bigcap}\,
A\big(J\big(\mathcal{S}_{\alpha}^*(\lambda)\big)\big).
$$
Using Theorem \ref{2theorem2.3}, we now conclude the following theorem. 

\begin{theorem}
For $\lambda<1$, we have
$$
A\big(J(\mathcal{S}(\lambda))\big)=\bigg\lbrace |\gamma|\leq\frac{1}{2(1-\lambda)}\bigg\rbrace.
$$	
\end{theorem}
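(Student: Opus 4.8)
The plan is to start from the identity
$$
A\big(J(\mathcal{S}(\lambda))\big)=\bigcap_{\alpha}A\big(J(\mathcal{S}_\alpha^*(\lambda))\big),
$$
which is already recorded in the excerpt, and to intersect the explicit descriptions furnished by Theorem~\ref{2theorem2.3} as $\alpha$ ranges over $(-\pi/2,\pi/2)$. Each set in the intersection is a union of a disk $D_\alpha=\{|\gamma|\le r(\alpha)\}$, with $r(\alpha)=1/(2(1-\lambda)\cos\alpha)$, and a segment $L_\alpha=[\,e^{i\alpha}r(\alpha),\,3e^{i\alpha}r(\alpha)\,]$ lying along the ray of argument $\alpha$. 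Note $r(\alpha)\ge r(0)=1/(2(1-\lambda))$ with equality only at $\alpha=0$, so the disks shrink to the target disk $D_0$ as $\alpha\to 0$.

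First I would establish the easy inclusion $\{|\gamma|\le 1/(2(1-\lambda))\}\subset A\big(J(\mathcal{S}(\lambda))\big)$: this disk is contained in every $D_\alpha$ (since $r(\alpha)\ge r(0)$), hence in every $A\big(J(\mathcal{S}_\alpha^*(\lambda))\big)$, hence in the intersection. For the reverse inclusion, take $\gamma$ in the intersection; I must show $|\gamma|\le 1/(2(1-\lambda))$. The key step is to use the freedom in $\alpha$ to ``aim'' the disk $D_\alpha$ so that it barely fails to contain $\gamma$ unless $\gamma$ is already in $D_0$. Concretely, write $\gamma=\rho e^{i\theta}$ with $\theta\in(-\pi/2,\pi/2)$ (the case $\mathrm{Re}\,\gamma\le 0$ is handled separately and is immediate, since then $\gamma$ lies on no segment $L_\alpha$ and in $D_\alpha$ only if $|\gamma|\le r(\alpha)$, and one can still push $r(\alpha)$ down toward $r(0)$... actually more carefully: if $\theta\notin(-\pi/2,\pi/2)$ then for \emph{every} $\alpha$, $\gamma\notin L_\alpha$, so membership forces $|\gamma|\le r(\alpha)$ for all $\alpha$, and letting $\alpha\to 0$ gives $|\gamma|\le r(0)$, done). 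So suppose $\theta\in(-\pi/2,\pi/2)$. Choosing $\alpha$ close to $\theta$ but with $\cos\alpha$ slightly larger forces $\gamma$ out of the segment $L_\alpha$ unless $\rho$ is small; more precisely, the segments $L_\alpha$ for $\alpha\ne\theta$ do not meet the ray $\arg=\theta$ at all, so for \emph{any} $\alpha\ne\theta$, membership of $\gamma$ in $A\big(J(\mathcal{S}_\alpha^*(\lambda))\big)$ forces $\gamma\in D_\alpha$, i.e.\ $\rho\le r(\alpha)$; letting $\alpha\to 0$ (which is $\ne\theta$ eventually, or always if $\theta\ne0$; and if $\theta=0$ just take $\alpha\to0$ through nonzero values) yields $\rho\le r(0)=1/(2(1-\lambda))$. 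This is exactly the claimed bound.

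The main obstacle, and the only point needing care, is the borderline case $\theta=0$, i.e.\ $\gamma$ real and positive: then $\gamma$ could a priori sit on the segment $L_0=[\,r(0),3r(0)\,]$, and one must rule out the values $\gamma\in(r(0),3r(0)]$. Here I would exploit that for small $\alpha\ne 0$ the segment $L_\alpha$ has moved off the positive real axis, so $\gamma>0$ lies in $A\big(J(\mathcal{S}_\alpha^*(\lambda))\big)$ only via the disk $D_\alpha$, forcing $\gamma\le r(\alpha)$; since $r(\alpha)\to r(0)$ as $\alpha\to 0$, we get $\gamma\le r(0)$, which excludes the open part $(r(0),3r(0)]$ and pins the intersection down to exactly the closed disk of radius $1/(2(1-\lambda))$. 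Finally I would assemble the two inclusions to conclude
$$
A\big(J(\mathcal{S}(\lambda))\big)=\Big\{\gamma\in\mathbb{C}:|\gamma|\le\tfrac{1}{2(1-\lambda)}\Big\},
$$
completing the proof.
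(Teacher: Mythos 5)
Your proposal is correct and follows exactly the route the paper intends: the paper states the identity $A\big(J(\mathcal{S}(\lambda))\big)=\bigcap_{\alpha}A\big(J(\mathcal{S}_\alpha^*(\lambda))\big)$ and then simply cites Theorem~\ref{2theorem2.3}, leaving the intersection computation to the reader. You have supplied that computation in full (including the only delicate point, namely that each segment $L_\alpha$ lies on the ray of argument $\alpha$, so perturbing $\alpha$ eliminates the segments and pins the intersection down to the closed disk of radius $1/(2(1-\lambda))$), and the argument is sound.
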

This theorem for the special case $\lambda=0$ was considered in \cite{Kim07}. 

In the next theorem, we have the inclusion of the image set 
$J(\mathcal{S}_{\alpha}^*(\lambda))$ in the class $\mathcal{S}$ for some restrictions on
$\alpha$. However, the case $\lambda=0$ has also been considered in \cite{Kim07}. 

\begin{theorem}
If $\lambda<1$,
then the relation
$$J(\mathcal{S}_{\alpha}^*(\lambda))\subset \mathcal{S}$$
holds precisely for $\cos\alpha\leq1/2(1-\lambda)$. However, if $-1/2\leq \lambda<1$, 
then the same inclusion follows for $\alpha=0$.	
\end{theorem}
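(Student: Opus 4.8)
The plan is to read off everything from the explicit description of $A\big(J(\mathcal{S}_\alpha^*(\lambda))\big)$ obtained in Theorem~\ref{2theorem2.3}. The key observation is that $I_1$ is the identity operator on $\mathfrak{F}$, so $I_1\big(J(\mathcal{S}_\alpha^*(\lambda))\big)=J(\mathcal{S}_\alpha^*(\lambda))$ and hence the inclusion $J(\mathcal{S}_\alpha^*(\lambda))\subset\mathcal{S}$ is equivalent to the statement $1\in A\big(J(\mathcal{S}_\alpha^*(\lambda))\big)$. By Theorem~\ref{2theorem2.3} this amounts to deciding when the real number $1$ lies in
$$
\bigg\{\gamma\in\mathbb{C}:\,|\gamma|\leq\frac{1}{2(1-\lambda)\cos\alpha}\bigg\}\;\bigcup\;\bigg[\frac{e^{i\alpha}}{2(1-\lambda)\cos\alpha},\frac{3e^{i\alpha}}{2(1-\lambda)\cos\alpha}\bigg].
$$

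First I would dispose of the disk. Since $\lambda<1$ and $\alpha\in(-\pi/2,\pi/2)$ give $1-\lambda>0$ and $\cos\alpha>0$, the condition $|1|\leq\frac{1}{2(1-\lambda)\cos\alpha}$ is equivalent to $\cos\alpha\leq\frac{1}{2(1-\lambda)}$. This immediately yields the announced sufficient condition, valid for every admissible $\alpha$, and in particular it shows that for $\alpha=0$ the disk alone delivers the inclusion whenever $\lambda\geq1/2$.

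Next I would look at the segment, which is contained in the ray through the origin of argument $\alpha$ (its near endpoint sitting on the boundary circle of the disk, the segment then running outward). If $\alpha\neq0$ this ray avoids the positive real axis, so $1$ cannot be on the segment; combined with the necessity built into Theorem~\ref{2theorem2.3} (which describes $A\big(J(\mathcal{S}_\alpha^*(\lambda))\big)$ exactly), this proves that for $\alpha\neq0$ the inclusion holds precisely when $\cos\alpha\leq\frac{1}{2(1-\lambda)}$. If $\alpha=0$ the segment is the real interval $\big[\frac{1}{2(1-\lambda)},\frac{3}{2(1-\lambda)}\big]$, and $1$ belongs to it exactly when $\frac{1}{2(1-\lambda)}\leq1\leq\frac{3}{2(1-\lambda)}$, that is, when $-1/2\leq\lambda\leq1/2$. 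Forming the union with the case $\lambda\geq1/2$ coming from the disk yields $-1/2\leq\lambda<1$, which is the content of the ``However'' clause.

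The arithmetic involved is routine; the only place that needs a little attention is $\alpha=0$, where both constituents of $A\big(J(\mathcal{S}_\alpha^*(\lambda))\big)$ contribute and must be merged, and where one should not confuse the (strictly stronger) requirement $\cos0\leq\frac{1}{2(1-\lambda)}$ with the actual range $-1/2\leq\lambda<1$. For completeness I would also note that $J[f]'(z)=f(z)/z$ is non-vanishing in $\mathbb{D}$ for $f\in\mathcal{S}_\alpha^*(\lambda)$, so that $J(\mathcal{S}_\alpha^*(\lambda))\subset\mathfrak{F}$ and the operator $A$ is being applied legitimately.
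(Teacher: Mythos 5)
Your argument is correct, and for $\alpha\neq 0$ it coincides with the paper's: both reduce the inclusion to the membership $1\in A\big(J(\mathcal{S}_{\alpha}^*(\lambda))\big)$ via $I_1=\mathrm{id}$, observe that the segment in Theorem~\ref{2theorem2.3} lies on the ray of argument $\alpha$ and so misses the point $1$, and conclude that only the disk condition $\cos\alpha\leq 1/\big(2(1-\lambda)\big)$ can (and does, by the exactness of Theorem~\ref{2theorem2.3}) deliver the inclusion. Where you diverge is the ``However'' clause: the paper treats $\alpha=0$ by a separate classical argument, namely $J[f]\in\mathcal{C}\subset\mathcal{S}$ for $f\in\mathcal{S}^*(\lambda)$ with $-1/2\leq\lambda<1$ (via $J(\mathcal{S}^*(\lambda))=\mathcal{K}(\lambda)\subset\mathcal{C}$ and Kaplan's theorem), whereas you stay entirely inside Theorem~\ref{2theorem2.3} and read off that for $\alpha=0$ the segment $\big[\tfrac{1}{2(1-\lambda)},\tfrac{3}{2(1-\lambda)}\big]$ contains $1$ exactly when $-1/2\leq\lambda\leq 1/2$, which merged with the disk's contribution $\lambda\geq 1/2$ gives the full range $-1/2\leq\lambda<1$. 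Your route is more uniform and in fact extracts slightly more information (it shows the range $-1/2\leq\lambda<1$ is exactly the set of $\lambda$ for which $1\in A\big(J(\mathcal{S}^*(\lambda))\big)$, not merely a sufficient condition), while the paper's route makes visible the geometric reason (close-to-convexity) behind the $\alpha=0$ case. Your closing remark that $J[f]'(z)=f(z)/z$ is non-vanishing, so that $J(\mathcal{S}_{\alpha}^*(\lambda))\subset\mathfrak{F}$, is a legitimate tidiness point the paper leaves implicit.
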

\begin{proof}
If $\alpha=0$, the result is trivial to prove.
Indeed, in this case, we have $J[f]\in \mathcal{C}\subset\mathcal{S}$ for 
$f\in \mathcal{S}^*(\lambda)$, $-1/2\leq\lambda<1$.

Thus, we assume that $\alpha\neq 0$. We have $J\big(\mathcal{S}_{\alpha}^*(\lambda)\big)
\subset \mathcal{S}$ if and only if $1\in A\big(J\big(\mathcal{S}_{\alpha}^*(\lambda))\big)$. 
This gives that $\cos \alpha \leq 1/2(1-\lambda\big)$, completing the proof.
\end{proof}

The following lemma gives a relation of the $\beta$-Ces\`aro transform of with the 
transform $J_\gamma$ for $\gamma=e^{-i\alpha}\sec\alpha$.

\begin{lemma}\label{2lemma2.6} 
Let $\beta\ge 0$ and  $-\pi/2<\alpha<\pi/2$.
Let $f\in\mathcal{A}$ be such that 
$$
\Bigg[\frac{g(z)}{z(1-z)^\beta} \Bigg]^{e^{i\alpha}\cos\alpha }=\frac{f(z)}{z}
$$
for some $g\in \mathcal{S}^*(\lambda)$, then 
$f\in \mathcal{S}_{\alpha}^*(\lambda-\beta/2)$
for $\lambda<1$.
\end{lemma}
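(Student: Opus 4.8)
The plan is to reduce the assertion to two elementary half-plane estimates by comparing the logarithmic derivatives of the two sides of the defining relation. Observe first that $f$ is automatically admissible: since $g\in\mathcal S^*(\lambda)\subset\mathcal S$, the function $g(z)/(z(1-z)^{\beta})$ is analytic and zero-free on $\mathbb D$ with value $1$ at the origin, so its complex power $[g(z)/(z(1-z)^{\beta})]^{e^{i\alpha}\cos\alpha}$ is a well-defined zero-free analytic function on $\mathbb D$ that equals $1$ at $0$; hence $f\in\mathcal A$, $f$ vanishes only at $z=0$, and $zf'(z)/f(z)$ is analytic on $\mathbb D$.

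The key step is the following identity, obtained by differentiating $f(z)/z=[g(z)/(z(1-z)^{\beta})]^{e^{i\alpha}\cos\alpha}$ logarithmically and multiplying by $z$. Using that the logarithmic derivative of $f(z)/z$ is $f'(z)/f(z)-1/z$, while that of $g(z)/(z(1-z)^{\beta})$ is $g'(z)/g(z)-1/z+\beta/(1-z)$, one gets
$$
\frac{zf'(z)}{f(z)}-1=e^{i\alpha}\cos\alpha\left(\frac{zg'(z)}{g(z)}-1+\frac{\beta z}{1-z}\right),\qquad z\in\mathbb D .
$$
Multiplying through by $e^{-i\alpha}$ to cancel the rotation factor and rearranging yields
$$
e^{-i\alpha}\frac{zf'(z)}{f(z)}=e^{-i\alpha}+\cos\alpha\left(\frac{zg'(z)}{g(z)}-1+\frac{\beta z}{1-z}\right).
$$

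Now I would take real parts. Since ${\rm Re}\,e^{-i\alpha}=\cos\alpha$, the last display gives
$$
{\rm Re}\left(e^{-i\alpha}\frac{zf'(z)}{f(z)}\right)=\cos\alpha\left({\rm Re}\,\frac{zg'(z)}{g(z)}+\beta\,{\rm Re}\,\frac{z}{1-z}\right),
$$
and the two standard facts ${\rm Re}\,(zg'(z)/g(z))>\lambda$ (exactly what $g\in\mathcal S^*(\lambda)$ means) and ${\rm Re}\,(z/(1-z))={\rm Re}\,(1/(1-z))-1>-1/2$ on $\mathbb D$, together with $\beta\ge0$ and $\cos\alpha>0$, force ${\rm Re}\,(e^{-i\alpha}zf'(z)/f(z))>(\lambda-\beta/2)\cos\alpha$; since $\cos(-\alpha)=\cos\alpha$, this is precisely the inequality defining $f\in\mathcal S^*_{\alpha}(\lambda-\beta/2)$, which proves the lemma. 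I do not expect a genuine obstacle: the computation is short, and its one non-algebraic ingredient is the sharp bound ${\rm Re}\,(z/(1-z))>-1/2$ — equivalently, that $z\mapsto1/(1-z)$ maps $\mathbb D$ into the half-plane $\{{\rm Re}\,w>1/2\}$ — and it is exactly this bound that produces the order shift by $-\beta/2$. One may alternatively route the same computation through Lemma~\ref{2lemma2.2}: the identity above shows that $k:=C_\beta[g]$ satisfies ${\rm Re}\,(1+zk''(z)/k'(z))={\rm Re}\,(zg'(z)/g(z))+\beta\,{\rm Re}\,(z/(1-z))>\lambda-\beta/2$, so $k\in\mathcal K(\lambda-\beta/2)$, and since $J[f]=I_{e^{i\alpha}\cos\alpha}[k]$, Lemma~\ref{2lemma2.2} identifies $f$ with a member of the corresponding spirallike class of order $\lambda-\beta/2$.
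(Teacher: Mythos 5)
Your computation is the paper's computation: logarithmic differentiation of the defining relation, multiplication by $z$, taking real parts, and the two half-plane bounds ${\rm Re}\,(zg'/g)>\lambda$ and ${\rm Re}\,(z/(1-z))>-1/2$. The one place where you diverge from correctness is the very last sentence of the main argument. From the hypothesis as literally stated (exponent $e^{i\alpha}\cos\alpha$) you correctly obtain
$$
{\rm Re}\left(e^{-i\alpha}\frac{zf'(z)}{f(z)}\right)>\Big(\lambda-\frac{\beta}{2}\Big)\cos\alpha ,
$$
but the defining inequality \eqref{2eq1.1} for $\mathcal S^*_{\alpha}(\mu)$ involves ${\rm Re}\,(e^{+i\alpha}zf'/f)$; the evenness of cosine fixes only the right-hand side, not the rotation inside the real part. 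So what you have actually proved is $f\in\mathcal S^*_{-\alpha}(\lambda-\beta/2)$, and the step ``since $\cos(-\alpha)=\cos\alpha$, this is precisely the inequality defining $f\in\mathcal S^*_{\alpha}(\lambda-\beta/2)$'' is a non sequitur: $\mathcal S^*_{-\alpha}(\mu)\neq\mathcal S^*_{\alpha}(\mu)$ for $\alpha\neq0$ (these are reflections of each other; compare the extremal function \eqref{3eq1}). To be fair, the inconsistency is already in the paper: its displayed identity $e^{i\alpha}[zf'/f-1]=\cos\alpha[\cdots]$ only follows from the hypothesis if the exponent is $e^{-i\alpha}\cos\alpha$, so either the exponent or the subscript $\alpha$ in the conclusion carries a wrong sign in the statement itself. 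Since the lemma is invoked downstream only with $\alpha=0$ (Theorems \ref{2theorem2.7} and the $\mathcal S^*$-preservation result), nothing breaks, but you should either restate the hypothesis with exponent $e^{-i\alpha}\cos\alpha$ or state the conclusion as $\mathcal S^*_{-\alpha}(\lambda-\beta/2)$; your alternative route through Lemma~\ref{2lemma2.2} has exactly the same hidden sign, since $J[f]=I_{e^{i\alpha}\cos\alpha}[k]=I_{e^{-i(-\alpha)}\cos(-\alpha)}[k]$ lands in $J(\mathcal S^*_{-\alpha}(\lambda-\beta/2))$.

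One smaller correction to your preliminary paragraph: the inclusion $\mathcal S^*(\lambda)\subset\mathcal S$ is false for $\lambda<0$ (the paper itself remarks on this at the end of Section~3), and the lemma is asserted for all $\lambda<1$. The zero-freeness of $g(z)/z$, which is what you actually need to define the complex power, still holds, but for a different reason: if $g$ vanished at some $z_0\neq0$, then $zg'(z)/g(z)$ would have a pole at $z_0$ and its real part could not stay above $\lambda$. With that justification in place, the well-definedness observation is a worthwhile addition that the paper omits.
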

\begin{proof}
Suppose that $g\in \mathcal{S}^*(\lambda)$ and
$$
\Bigg[\frac{g(z)}{z(1-z)^\beta} \Bigg]^{e^{i\alpha}\cos\alpha }=\frac{f(z)}{z}.
$$
Logarithm derivative obtains
$$
e^{i\alpha}\Bigg[\frac{zf'(z)}{f(z)}-1\Bigg]
=\cos \alpha \Bigg[\frac{zg'(z)}{g(z)}-1+\frac{\beta z}{1-z}\Bigg],
$$
which implies that 
$$
{\rm Re}\,\Bigg[e^{i\alpha}\frac{zf'(z)}{f(z)}\Bigg]
={\rm Re}\, \Bigg[\cos \alpha\frac{zg'(z)}{g(z)}\Bigg]+{\rm Re}\,
\Bigg[\cos\alpha\frac{\beta z}{1-z}\Bigg].
$$
Since $g\in \mathcal{S}^*(\lambda)$ and ${\rm Re}\,(z/(1-z))>-1/2$ for $|z|<1$, it follows that
$$
{\rm Re}\,\Bigg[e^{i\alpha}\frac{zf'(z)}{f(z)}\Bigg]
>\bigg(\lambda-\frac{\beta}{2}\bigg)\cos\alpha.
$$
Thus, $f\in \mathcal{S}_{\alpha}^*(\lambda-\beta/2)$
for $\lambda<1$ follows by the definition \eqref{2eq1.1}, completing the proof.
\end{proof}

As an application of Theorem~\ref{2theorem2.3} and Lemma~\ref{2lemma2.6}, 
we next find restriction on $\beta$ for which $C_\beta(\mathcal{S}^*(\lambda))$ 
is contained in $\mathcal{S}$. 

\begin{theorem}\label{2theorem2.7}
For $-1/2\leq\lambda<1$ and $0\leq\beta\leq 2\lambda+1$, the relation 
$C_\beta(\mathcal{S}^*(\lambda))\subset \mathcal{S}$ holds.
\end{theorem}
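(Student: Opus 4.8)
The plan is to reduce the $\beta$-Ces\`aro transform to an Alexander transform of a spirallike function of a shifted order, by applying Lemma~\ref{2lemma2.6} with the choice $\alpha=0$, and then invoking Theorem~\ref{2theorem2.3}. First I would fix $g\in\mathcal{S}^*(\lambda)$ and set $f(z):=g(z)/(1-z)^\beta$, with the principal branch of $(1-z)^{-\beta}$ so that $f'(0)=1$. Since $e^{i\alpha}\cos\alpha=1$ when $\alpha=0$, the defining identity of Lemma~\ref{2lemma2.6} becomes $f(z)/z=g(z)/\big(z(1-z)^\beta\big)$, which holds by construction, and $f\in\mathcal{A}$ (a direct check gives $f(0)=0$ and $f'(0)=1$). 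Hence Lemma~\ref{2lemma2.6} yields $f\in\mathcal{S}^*_0(\lambda-\beta/2)=\mathcal{S}^*(\mu)$, where $\mu:=\lambda-\beta/2<1$ because $\lambda<1$ and $\beta\ge0$.

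Next I would record the identity
$$
C_\beta[g](z)=\int_{0}^{z}\frac{g(w)}{w(1-w)^\beta}\,dw=\int_{0}^{z}\frac{f(w)}{w}\,dw=J[f](z)=J_1[f](z),
$$
so that the desired inclusion $C_\beta[g]\in\mathcal{S}$ for all $g\in\mathcal{S}^*(\lambda)$ is equivalent to the single membership $1\in A\big(J(\mathcal{S}^*(\mu))\big)$.

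To check this, I would apply Theorem~\ref{2theorem2.3} with $\alpha=0$ and order $\mu$, which gives
$$
A\big(J(\mathcal{S}^*(\mu))\big)=\bigg\{\gamma\in\mathbb{C}:\,|\gamma|\le\frac{1}{2(1-\mu)}\bigg\}\bigcup\bigg[\frac{1}{2(1-\mu)},\,\frac{3}{2(1-\mu)}\bigg].
$$
The point $1$ lies in the disc precisely when $2(1-\mu)\le1$, i.e. $\mu\ge1/2$, and it lies in the segment precisely when $\frac{1}{2(1-\mu)}\le1\le\frac{3}{2(1-\mu)}$, i.e. $-1/2\le\mu\le1/2$; together these say $1\in A\big(J(\mathcal{S}^*(\mu))\big)$ exactly when $\mu\ge-1/2$, that is, when $\beta\le2\lambda+1$. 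Combined with $\beta\ge0$ (and $\lambda\ge-1/2$, which is precisely what makes $0\le\beta\le2\lambda+1$ a nonempty range), this is the hypothesis of the theorem, so $J[f]\in\mathcal{S}$, hence $C_\beta[g]\in\mathcal{S}$; as $g$ was arbitrary, $C_\beta(\mathcal{S}^*(\lambda))\subset\mathcal{S}$.

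Once Lemma~\ref{2lemma2.6} and Theorem~\ref{2theorem2.3} are in hand, the argument is essentially a two-line translation: the Ces\`aro weight $(1-z)^{-\beta}$ costs exactly $\beta/2$ in the spirallike order, and the admissible set $A\big(J(\mathcal{S}^*(\mu))\big)$ contains $\gamma=1$ precisely for $\mu\ge-1/2$. I do not expect a genuine obstacle here; the only points that need care are the branch conventions for the powers (so that the integrands and compositions are normalized consistently with the definitions of $C_\beta$ and $J$) and the routine verification that $f\in\mathcal{S}^*(\mu)$, so that Theorem~\ref{2theorem2.3} is applicable to it.
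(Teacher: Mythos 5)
Your proposal is correct and follows essentially the same route as the paper: apply Lemma~\ref{2lemma2.6} with $\alpha=0$ to rewrite $C_\beta[g]$ as $J[f]$ with $f\in\mathcal{S}^*(\lambda-\beta/2)$, then check that $1\in A\big(J(\mathcal{S}^*(\lambda-\beta/2))\big)$ via Theorem~\ref{2theorem2.3}. If anything, your verification that $\gamma=1$ lies in the union of the disc and the segment is slightly more complete than the paper's, which only records the condition $1\le 3/\big(2(1-\lambda+\beta/2)\big)$.
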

\begin{proof}
Substituting $\alpha=0$ in Lemma \ref{2lemma2.6}, for a given function 
$g\in \mathcal{S}^*(\lambda)$, we can find another function 
$f \in \mathcal{S}^*(\lambda-\beta/2)$ satisfying
$$
\int_0^z \frac{g(w)}{w(1-w)^\beta}dw =\int_0^z \frac{f(w)}{w}dw.
$$
Secondly, Theorem \ref{2theorem2.3} gives that $J_{\gamma}(\mathcal{S}^*(\lambda-\beta/2))
\subset \mathcal{S}$ whenever $\gamma$ lies either in 
$\lbrace \gamma\in\mathbb{C}:\, |\gamma|\leq1/2(1-\lambda+\beta/2)\rbrace$ or in 
$[1/2(1-\lambda+\beta/2),3/2(1-\lambda+\beta/2)]$. It follows that 
$$
C_\beta(\mathcal{S}^*(\lambda))\subset \mathcal{S} 
\quad \mbox{for $1 \leq \dfrac{3}{2(1-\lambda+\beta/2)}$},
$$
that is for $\beta\leq2\lambda+1$. This completes the proof.
\end{proof}

We remark that Theorem~\ref{2theorem2.7} can be proved alternatively by using 
the classical theorem of Kaplan (\cite[$\S$2.6]{Duren83}) which states that
$f\in \mathcal{C}$ if and only if
$$
\int_{\theta_1}^{\theta_2}{\rm Re}\,\bigg(1+\frac{zf''(z)}{f'(z)}\bigg)d\theta>-\pi,
$$
whenever $0\leq\theta_1<\theta_2\leq 2\pi$.
Using this, we obtain
\begin{align*}
\int_{\theta_1}^{\theta_2}{\rm Re}\,\bigg(1+\frac{zC_\beta[f]''(z)}
{C_\beta[f]'(z)}\bigg)d\theta &= \int_{\theta_1}^{\theta_2}{\rm Re}\,
\bigg(\frac{zf'(z)}{f(z)}+\frac{\beta z}{1-z}\bigg)d\theta\\
&>\lambda(\theta_2-\theta_1)-\frac{\beta}{2}(\theta_2-\theta_1)\geq -(\beta-2\lambda) \pi.
\end{align*}
This gives that $C_\beta[f]\in \mathcal{C}\subset\mathcal{S}$, for $\beta\leq 2\lambda+1$. 

In the following example, we show that the quantity $2\lambda+1$ can not be replaced 
by any bigger number in Theorem \ref{2theorem2.7}.

\begin{example}\label{2example2.8}
For $-1/2\leq\lambda<1$, let $f(z)=z/(1-z)^{2-2\lambda}$. Recall that this $f$ is an 
element of the class $\mathcal{S}^*(\lambda)$. From the definition of $C_\beta[f]$ we obtain
\begin{equation}\label{C-beta}
C_\beta[f](z)=\int_{0}^{z}\frac{1}{(1-w)^{\beta-2\lambda+2}}dw=\frac{1}{(\beta-2\lambda+1)}
\Bigg[\frac{1}{(1-z)^{(\beta-2\lambda+1)}} - 1\Bigg].
\end{equation}
Note that $C_\beta[f]$ is univalent if and only if 
$g(z)=(1-z)^{1-\beta-2+2\lambda}$ is univalent. However, by the lemma of 
W. C. Royster stated in \cite[p.~386]{Royster65}, we obtain that $g(z)$ is univalent if and only if 
$2\lambda-3\leq\beta\leq 2\lambda+1$. It follows that if $\beta>2\lambda+1$, 
then $C_\beta[f]$ does not lie on the class $\mathcal{S}$.
\end{example}

If we choose $\beta=1$ in Theorem \ref{2theorem2.7}, it produces the following 
well-known result \cite{Hartmann74} concerning the Ces\`{a}ro transform $C[f]$ on 
the class $\mathcal{S}^*(\lambda)$, $0\leq\lambda<1$:

\begin{corollary}
For $0\leq\lambda<1$, the relation $C(\mathcal{S}^*(\lambda))\subset \mathcal{S}$ holds.
\end{corollary}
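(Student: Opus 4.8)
The plan is to obtain this as a direct specialization of Theorem~\ref{2theorem2.7}. Since the statement asserts $C(\mathcal{S}^*(\lambda))\subset\mathcal{S}$ for $0\le\lambda<1$, and recalling that $C[f]=C_1[f]$ by the notation fixed in the introduction, the natural route is simply to substitute $\beta=1$ into Theorem~\ref{2theorem2.7}. So the first step is to check that the hypotheses of Theorem~\ref{2theorem2.7} are met under this substitution.

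Concretely, Theorem~\ref{2theorem2.7} requires $-1/2\le\lambda<1$ and $0\le\beta\le 2\lambda+1$. Here we restrict to $0\le\lambda<1$, which is a subinterval of $[-1/2,1)$, so the first condition holds. For the second condition with $\beta=1$, I need $0\le 1\le 2\lambda+1$; the inequality $1\le 2\lambda+1$ is equivalent to $\lambda\ge 0$, which is exactly our hypothesis. Hence for every $\lambda\in[0,1)$ the pair $(\lambda,\beta)=(\lambda,1)$ satisfies the hypotheses of Theorem~\ref{2theorem2.7}, and we conclude $C_1(\mathcal{S}^*(\lambda))\subset\mathcal{S}$, i.e. $C(\mathcal{S}^*(\lambda))\subset\mathcal{S}$.

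There is essentially no obstacle here: the corollary is a bookkeeping consequence, and the only thing to be careful about is the threshold case $\lambda=0$, where $2\lambda+1=1=\beta$ so we sit exactly at the boundary of the admissible range — but Theorem~\ref{2theorem2.7} is stated with the weak inequality $\beta\le 2\lambda+1$, so the endpoint is included and the case $\lambda=0$ (the classical Ces\`aro transform on $\mathcal{S}^*$) is covered. If a slightly more self-contained argument were desired, one could instead invoke the Kaplan-type computation displayed after Theorem~\ref{2theorem2.7}: with $\beta=1$ the bound there reads $>\lambda(\theta_2-\theta_1)-\tfrac12(\theta_2-\theta_1)\ge -(1-2\lambda)\pi\ge -\pi$ precisely when $\lambda\ge 0$, again yielding $C[f]\in\mathcal{C}\subset\mathcal{S}$. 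Either way the proof is a one-line specialization, and it is natural to present it as such.
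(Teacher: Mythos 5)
Your proposal is correct and matches the paper exactly: the corollary is obtained by setting $\beta=1$ in Theorem~\ref{2theorem2.7}, with the hypothesis $0\le\lambda<1$ being precisely what makes $\beta=1\le 2\lambda+1$ admissible. The paper gives no further argument, so your verification of the boundary case $\lambda=0$ is, if anything, slightly more careful than the original.
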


In the statement of Theorem \ref{2theorem2.7}, if we replace $\mathcal{S}$ 
by $\mathcal{S}^*$ then we have new restriction on $\beta$, which is described in the following theorem:

\begin{theorem}
For $0\leq\lambda<1$ and $0\leq\beta\leq2\lambda$, the inclusion relation 
$C_\beta(\mathcal{S}^*(\lambda))\subset \mathcal{S}^*$ holds.
\end{theorem}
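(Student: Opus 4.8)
The plan is to run the reduction used for Theorem~\ref{2theorem2.7} one notch further: rather than only placing $C_\beta[f]$ in $\mathcal{S}$, I would place it in $\mathcal{K}$ and then invoke $\mathcal{K}\subseteq\mathcal{S}^*$. Fix $f\in\mathcal{S}^*(\lambda)$. As already observed in the remark after the proof of Theorem~\ref{2theorem2.7} (logarithmic differentiation of $C_\beta[f]'(z)=f(z)/(z(1-z)^\beta)$),
\begin{equation*}
1+\frac{zC_\beta[f]''(z)}{C_\beta[f]'(z)}=\frac{zf'(z)}{f(z)}+\frac{\beta z}{1-z}.
\end{equation*}
Since $f\in\mathcal{S}^*(\lambda)\subseteq\mathcal{S}$, the function $f(z)/z$ is non-vanishing in $\mathbb{D}$, hence $C_\beta[f]'$ is non-vanishing and $C_\beta[f]\in\mathfrak{F}$, so it is meaningful to test $C_\beta[f]$ for starlikeness.

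Next I would take real parts. From $f\in\mathcal{S}^*(\lambda)$ we have ${\rm Re}\,(zf'(z)/f(z))>\lambda$, and ${\rm Re}\,(z/(1-z))>-1/2$ on $\mathbb{D}$ (the estimate already used in the proof of Lemma~\ref{2lemma2.6}), so
\begin{equation*}
{\rm Re}\,\left(1+\frac{zC_\beta[f]''(z)}{C_\beta[f]'(z)}\right)>\lambda-\frac{\beta}{2},
\end{equation*}
that is, $C_\beta[f]\in\mathcal{K}(\lambda-\beta/2)$. The hypothesis $0\le\beta\le 2\lambda$ (together with $\lambda\ge 0$) is exactly what forces $\lambda-\beta/2\ge 0$, whence $\mathcal{K}(\lambda-\beta/2)\subseteq\mathcal{K}(0)=\mathcal{K}$; and $\mathcal{K}\subseteq\mathcal{S}^*(1/2)\subseteq\mathcal{S}^*$ by the Marx--Strohh\"{a}cker theorem (or merely by the classical fact that convex functions are starlike). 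Therefore $C_\beta[f]\in\mathcal{S}^*$, and since $f\in\mathcal{S}^*(\lambda)$ was arbitrary the inclusion $C_\beta(\mathcal{S}^*(\lambda))\subseteq\mathcal{S}^*$ follows.

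I do not anticipate a genuine obstacle once the calculus from Theorem~\ref{2theorem2.7} is available; the one point to watch is the direction of the inclusions, namely that a larger order of convexity gives a \emph{smaller} class, so one needs the full strength $\lambda-\beta/2\ge 0$ (not just $\lambda-\beta/2<1$, which sufficed in Theorem~\ref{2theorem2.7}) in order to get back into $\mathcal{K}$. An alternative route, parallel to the proof of Theorem~\ref{2theorem2.7}, is to apply Lemma~\ref{2lemma2.6} with $\alpha=0$: for $g\in\mathcal{S}^*(\lambda)$ it yields $f\in\mathcal{S}^*(\lambda-\beta/2)$ with $C_\beta[g]=J[f]$, and then $\lambda-\beta/2\ge 0$ gives $f\in\mathcal{S}^*$, so by Alexander's theorem $C_\beta[g]=J[f]\in\mathcal{K}\subseteq\mathcal{S}^*$. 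The function $f(z)=z/(1-z)^{2-2\lambda}$ of Example~\ref{2example2.8} is the natural test case for the bound $\beta\le 2\lambda$.
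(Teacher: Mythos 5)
Your proof is correct and is essentially the paper's argument: the paper chains Lemma~\ref{2lemma2.6} (with $\alpha=0$) and Lemma~\ref{2lemma2.2} to land in $\mathcal{K}(\lambda-\beta/2)\subset\mathcal{K}\subset\mathcal{S}^*$, which is exactly the logarithmic-differentiation computation you carry out directly (and your stated ``alternative route'' is verbatim the paper's proof). No gaps; the local univalence remark and the observation that one needs the full strength of $\lambda-\beta/2\ge 0$ are both sound.
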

\begin{proof}
If $0\leq\lambda<1$ and $0\leq\beta\leq2\lambda$, then we notice that the restrictions 
on the parameters in the hypothesis of Lemma \ref{2lemma2.6} are satisfied. Thus, it gives
$$
C_\beta(\mathcal{S}^*(\lambda))\subset J(\mathcal{S}^*(\lambda-\beta/2)).
$$
Also, if we choose $\alpha=0$ in Lemma \ref{2lemma2.2} we obtain
$$
J(\mathcal{S}^*(\lambda))=\mathcal{K}(\lambda).
$$ 
Combination of the above two relations clearly yields
$$
C_\beta(\mathcal{S}^*(\lambda))\subset \mathcal{K}(\lambda-\beta/2),
$$
which is valid since $\lambda-\beta/2\ge 0$ and we complete the proof.
\end{proof}	

The following example shows that, for $\beta>2\lambda$, the image of 
$\mathcal{S}^*(\lambda)$ under the $\beta$-Ces\`{a}ro transform does not lie in the starlike family.

\begin{example}\label{2example2.11}
Consider the function $f(z)=z/(1-z)^{2-2\lambda}$ for $0\le \lambda<1$. 
The $\beta$-Ces\`aro transform thus takes to the form \eqref{C-beta}.
It is easy to calculate that 
$$
{\rm Re}\,\bigg(1+\frac{zC_\beta[f]''(z)}{C_\beta[f]'(z)}\bigg)
={\rm Re}\,\bigg(1+ \frac{(\beta-2\lambda+2)z}{1-z}\bigg)
>1-\frac{\beta-2\lambda+2}{2}
=\lambda-\frac{\beta}{2},
$$
for $\beta >2\lambda$.

On the other hand, for $2\lambda<\beta$, J. A. Pfaltzgraff, M. O. Reade, 
and T. Umezawa in \cite{Pfaltzgraff76} showed that there exist a point $z_0 \in \mathbb{D}$ such that 
$$
{\rm Re}\,\bigg(\frac{z_0C_\beta[f]'(z_0)}{C_\beta[f](z_0)}\bigg)<0
$$
(see also \cite[pp. 44-45]{MM-Book}).
Hence $C_\beta[f]$ is not a starlike function. 
\end{example}

\begin{remark}
Recall from \cite{Hartmann74} that the Ces\`aro transform does not preserve 
the starlikeness. More generally, here Example \ref{2example2.11} shows that 
the $\beta$-Ces\`{a}ro transform also does not preserve the starlikeness, for any $\beta>0$. 
\end{remark}

We already know that the Alexander transform and the Ces\`{a}ro transform preserve 
the class $\mathcal{K}$. In the following, we determine the values of $\beta$ for 
which the $\beta$-Ces\`aro transform  preserve the class $\mathcal{K}$.

\begin{theorem}\label{2theorem2.13}
For $0\leq\beta\leq 1$, the inclusion relation $C_\beta(\mathcal{K})\subset 
\mathcal{K}((1-\beta)/2)$ holds. In particular, we have $C_\beta(\mathcal{K})\subset \mathcal{K}$.
\end{theorem}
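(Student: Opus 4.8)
The plan is to reduce the statement about $C_\beta[f]$ for $f\in\mathcal{K}$ to a direct estimate on the real part of $1+zC_\beta[f]''(z)/C_\beta[f]'(z)$. Recall the Hornich representation $C_\beta[f](z)=\bigl(J[f]\oplus(\beta\star g)\bigr)(z)$ with $g(z)=-\log(1-z)$, so that $C_\beta[f]'(z)=(J[f])'(z)\,((\beta\star g)'(z))=\dfrac{f(z)}{z}\cdot\dfrac{1}{(1-z)^\beta}$. Taking the logarithmic derivative and multiplying by $z$ gives
$$
1+\frac{zC_\beta[f]''(z)}{C_\beta[f]'(z)}=\frac{zf'(z)}{f(z)}+\frac{\beta z}{1-z}.
$$
This identity is exactly the one used in the alternative proof of Theorem~\ref{2theorem2.7} and in Example~\ref{2example2.11}, so I would quote it rather than rederive it.

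First I would recall that $f\in\mathcal{K}$ is equivalent to $J[f]\in\mathcal{S}^*$ by Alexander's theorem, hence $\mathrm{Re}\,\bigl(zf'(z)/f(z)\bigr)>0$ for $z\in\mathbb{D}$; wait—more care is needed: here $f\in\mathcal{K}=\mathcal{K}(0)$ means $\mathrm{Re}\,(1+zf''/f')>0$, and this is \emph{not} the same as $\mathrm{Re}\,(zf'/f)>0$. Rather, $f\in\mathcal{K}$ implies $f\in\mathcal{S}^*(1/2)$ by the classical Marx–Strohhäcker inequality, i.e. $\mathrm{Re}\,\bigl(zf'(z)/f(z)\bigr)>1/2$ for $z\in\mathbb{D}$. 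That is the key input. Combining this with the elementary bound $\mathrm{Re}\,\bigl(z/(1-z)\bigr)>-1/2$ valid on $\mathbb{D}$, I get
$$
\mathrm{Re}\,\left(1+\frac{zC_\beta[f]''(z)}{C_\beta[f]'(z)}\right)
=\mathrm{Re}\,\frac{zf'(z)}{f(z)}+\beta\,\mathrm{Re}\,\frac{z}{1-z}
>\frac12-\frac{\beta}{2}=\frac{1-\beta}{2}.
$$
By the definition~\eqref{2eq1.2} of $\mathcal{K}(\lambda)$, this says precisely $C_\beta[f]\in\mathcal{K}\bigl((1-\beta)/2\bigr)$, giving the first inclusion. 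For the ``in particular'' clause, note that when $0\le\beta\le 1$ we have $(1-\beta)/2\ge 0$, and $\mathcal{K}(\lambda)\subset\mathcal{K}(0)=\mathcal{K}$ for $\lambda\ge 0$ since a larger order of convexity is a stronger condition; hence $C_\beta(\mathcal{K})\subset\mathcal{K}$.

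The only genuine subtlety, and what I expect to be the main point to get right, is the invocation of the Marx–Strohhäcker bound $\mathcal{K}\subset\mathcal{S}^*(1/2)$ — this is the sharp constant that makes the arithmetic work out to exactly $(1-\beta)/2$, and it is what forces the range $0\le\beta\le 1$ (so that the resulting order of convexity is nonnegative and the class $\mathcal{K}((1-\beta)/2)$ is contained in $\mathcal{S}$, indeed in $\mathcal{K}$). Everything else — the logarithmic-derivative identity, the estimate on $\mathrm{Re}\,(z/(1-z))$, and the monotonicity $\mathcal{K}(\lambda)\subset\mathcal{K}$ for $\lambda\ge0$ — is routine. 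One should also remark that local univalence of $C_\beta[f]$ is automatic since $C_\beta[f]'(z)=f(z)/(z(1-z)^\beta)$ is nonvanishing on $\mathbb{D}$ (as $f\in\mathcal{S}^*$ so $f(z)/z\ne 0$), so the computation of $zC_\beta[f]''/C_\beta[f]'$ is legitimate throughout the disk.
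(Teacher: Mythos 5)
Your proof is correct and follows essentially the same route as the paper: the logarithmic-derivative identity $1+zC_\beta[f]''/C_\beta[f]'=zf'/f+\beta z/(1-z)$, combined with the Marx--Strohh\"acker inclusion $\mathcal{K}\subset\mathcal{S}^*(1/2)$ and the bound $\mathrm{Re}\,(z/(1-z))>-1/2$, is exactly the paper's argument. Your added remarks on local univalence and on why $0\le\beta\le1$ yields the ``in particular'' clause are correct and slightly more careful than the published proof.
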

\begin{proof} For $f \in \mathcal{K}$, it is easy to see that
$$
{\rm Re}\,\bigg(1+\frac{zC_\beta[f]''(z)}{C_\beta[f]'(z)}\bigg) 
= {\rm Re}\,\bigg(\frac{zf'(z)}{f(z)}+\frac{\beta z}{1-z}\bigg)>\frac{1-\beta}{2},
$$
since $\mathcal{K}\subset \mathcal{S}^*(1/2)$ and 
${\rm Re}\,(z/(1-z))>-1/2$ for $|z|<1$. By the definition \eqref{2eq1.2}, 
it follows that $C_\beta[f]\in \mathcal{K}((1-\beta)/2)$,  for $0\leq\beta\leq1$. Hence proved. 
\end{proof}	

For $\beta>1$, we have the following counterexample to show that $C_\beta[f]$ 
need not be convex though $f \in \mathcal{K}$. 
\begin{example}
Let $f(z)=z/(1-z)$, $z\in\mathbb{D}$. It is well known that $f\in \mathcal{K}$. We obtain
$$
C_\beta[f](z)=\int_{0}^{z}\frac{1}{(1-w)^{\beta+1}}dw.
$$
It is easy to calculate that 
$$
{\rm Re}\,\bigg(1+\frac{zC_\beta[f]''(z)}{C_\beta[f]'(z)}\bigg)
= 1+{\rm Re}\,\bigg(\frac{(\beta+1)z}{1-z}\bigg).
$$
For $\beta>1$, there is a sequence of points  $z_n=-1+1/n\in\mathbb{D}$
such that
$$
1+{\rm Re}\,\bigg(\frac{(\beta+1)z_n}{1-z_n}\bigg)=\frac{n(1-\beta)+\beta}{2n-1}<0
$$
for $n>\beta/(\beta-1)$.
Therefore, $C_\beta[f]$ need not be a convex function, for $\beta>1$.
\end{example}

The following lemma is due to E. P. Merkes and J. Wright \cite{Merkes71} which 
gives a refinement of Theorem~\ref{2theorem2.13} to the close-to-convex family.
\begin{lemma}\label{2lemma2.15}
Let $f(z)=\sum_{n=1}^{\infty} a_n z^n$ be analytic and $g(z)=\sum_{n=1}^{\infty} b_n z^n$ be an
analytic univalent starlike function in 
$\mathbb{D}$. If $H$ denotes the convex hull of the image of $\mathbb{D}$ under the mapping 
$e^{i\alpha}(f'/g')$ for all $\alpha\in\mathbb{R}$, then $e^{i\alpha}(f/g)\in H$ in $\mathbb{D}$.
\end{lemma}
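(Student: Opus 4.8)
The plan is to represent $e^{i\alpha}f(z)/g(z)$ as an integral average of the values of $e^{i\alpha}f'/g'$ on $\mathbb D$, the path of integration being furnished by the starlikeness of $g$. Fix $z\in\mathbb D$; the case $z=0$ is trivial, since $(f/g)(0)=a_1/b_1=(f'/g')(0)$. As $g$ is univalent with $g(0)=0$, its inverse $g^{-1}$ is holomorphic on $g(\mathbb D)$, and since $g(\mathbb D)$ is starlike with respect to the origin we have $t\,g(z)\in g(\mathbb D)$ for every $t\in[0,1]$. Hence
\[
\gamma(t):=g^{-1}\!\bigl(t\,g(z)\bigr),\qquad t\in[0,1],
\]
defines a (real-analytic) curve in $\mathbb D$ with $\gamma(0)=0$ and $\gamma(1)=z$.

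Differentiating the identity $g(\gamma(t))=t\,g(z)$ and using $g'\neq 0$ on $\mathbb D$ gives $\gamma'(t)=g(z)/g'(\gamma(t))$. Integrating $f'$ along $\gamma$ and substituting,
\[
f(z)=\int_\gamma f'(w)\,dw=\int_0^1 f'(\gamma(t))\,\gamma'(t)\,dt
 = g(z)\int_0^1 \frac{f'(\gamma(t))}{g'(\gamma(t))}\,dt,
\]
whence $f(z)/g(z)=\int_0^1 (f'/g')(\gamma(t))\,dt$. Multiplying by $e^{i\alpha}$, the number $e^{i\alpha}f(z)/g(z)$ is the integral, against the probability measure $dt$ on $[0,1]$, of the continuous function $t\mapsto e^{i\alpha}(f'/g')(\gamma(t))$, whose range is a compact subset of $e^{i\alpha}(f'/g')(\mathbb D)$. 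Therefore it lies in the convex hull of that range, and a fortiori in $H$, which is the assertion.

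I do not expect a deep obstacle here: the crux is hitting on the right representation, namely the curve $\gamma$ characterized by $g(\gamma(t))=t\,g(z)$, which exists and is admissible precisely because $g$ is univalent (so $g^{-1}$ is well defined and holomorphic) and starlike (so the segment $[0,g(z)]$ is attained inside $g(\mathbb D)$). The only thing to verify carefully is that $\gamma$ is a genuine $C^1$ path in $\mathbb D$ joining $0$ to $z$, after which the displayed one-line computation and the elementary fact that an average of values of a function lies in the convex hull of its range complete the proof. If one wishes to avoid any concern about the path reaching the boundary, one can first establish the inclusion on each disk $\{|z|\le r\}$, $r<1$ --- where $\gamma([0,1])$ stays in a fixed compact subset of $\mathbb D$ --- and then let $r\to 1$.
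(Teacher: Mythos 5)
Your proof is correct. The paper itself gives no proof of this lemma---it is quoted verbatim from Merkes and Wright \cite{Merkes71}---and your argument is essentially the one in that source: writing $f(z)/g(z)=\int_0^1 (f'/g')\bigl(g^{-1}(t\,g(z))\bigr)\,dt$, where starlikeness of $g(\mathbb{D})$ guarantees the path $t\mapsto g^{-1}(t\,g(z))$ is well defined in $\mathbb{D}$, and then observing that this average lies in the (automatically closed, since the range over $t\in[0,1]$ is compact) convex hull of the values of $f'/g'$. Your closing worry about the path approaching the boundary is unnecessary, since $\gamma([0,1])=g^{-1}([0,g(z)])$ is already a compact subset of $\mathbb{D}$.
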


Now we prove the refinement of Theorem~\ref{2theorem2.13} as indicated above.

\begin{theorem}\label{2theorem2.17}
For $0\leq\beta\leq 1$, the inclusion relation $C_\beta(\mathcal{C})\subset \mathcal{C}$ holds.
\end{theorem}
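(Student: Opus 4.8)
The plan is to express the close-to-convexity condition for $C_\beta[f]$ as an integral inequality and then combine it with Kaplan's criterion together with Lemma~\ref{2lemma2.15}. Fix $f\in\mathcal{C}$, so there is a convex function $\phi\in\mathcal{K}$ and a real $\alpha_0\in(-\pi/2,\pi/2)$ with $\operatorname{Re}(e^{i\alpha_0}f'/\phi')>0$ in $\mathbb{D}$. Since $g:=J[\phi]$ satisfies $zg'(z)=\phi(z)$ (so $g'=\phi/z$ and $g$ is starlike), we may rewrite the hypothesis as $\operatorname{Re}\bigl(e^{i\alpha_0}\,zf'(z)/\phi(z)\bigr)>0$; equivalently $f$ has an associated starlike companion. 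The first step is the elementary computation, already used in the proof of Theorem~\ref{2theorem2.13},
$$
1+\frac{z\,C_\beta[f]''(z)}{C_\beta[f]'(z)}=\frac{zf'(z)}{f(z)}+\frac{\beta z}{1-z},
$$
so that the task reduces to controlling $\operatorname{Re}\bigl(zf'/f\bigr)$ on arcs. Because $f$ is only close-to-convex, $\operatorname{Re}(zf'/f)$ need not be bounded below pointwise; instead one must use an \emph{averaged} estimate.

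Next I would invoke Lemma~\ref{2lemma2.15} with the pair $(f,g)$ where $g=J[\phi]\in\mathcal{S}^*$: the convex hull $H$ of the image of $\mathbb{D}$ under $e^{i\alpha}(f'/g')$ over all real $\alpha$ does not contain $0$ (this is exactly the close-to-convexity of $f$), hence $e^{i\alpha_0}(f/g)$ omits $0$ and in fact lies in a half-plane through the origin; so $f/g$, and therefore $zf'/f=(zg'/g)\cdot(g/f)\cdot(f'/g')^{-1}$... rather, more cleanly, since $g(z)=\phi(z)/1\cdot$ wait---with $g$ starlike one has $zg'/g$ with positive real part, and Lemma~\ref{2lemma2.15} upgrades the close-to-convexity of $f$ relative to $g$ into the statement that $f$ itself is close-to-convex \emph{with the same starlike $g$ in the strong sense that $\arg(f/g)$ is suitably bounded}. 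Concretely, one obtains that $f$ satisfies Kaplan's condition, i.e.
$$
\int_{\theta_1}^{\theta_2}\operatorname{Re}\!\left(\frac{zf'(z)}{f(z)}\right)d\theta>-\pi
\quad\text{for }0\le\theta_1<\theta_2\le 2\pi,\ z=re^{i\theta},\ r\to 1,
$$
because $f\in\mathcal{C}$ already implies $1+zf''/f'$ satisfies Kaplan's inequality, and Lemma~\ref{2lemma2.15} is the device that transfers this to $zf'/f$ via the relation between $f$ and its starlike companion.

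Then I would bound the remaining term: since $\operatorname{Re}\bigl(z/(1-z)\bigr)>-\tfrac12$ pointwise, we get $\int_{\theta_1}^{\theta_2}\operatorname{Re}\bigl(\beta z/(1-z)\bigr)\,d\theta>-\tfrac{\beta}{2}(\theta_2-\theta_1)\ge -\tfrac{\beta}{2}\cdot 2\pi=-\beta\pi\ge-\pi$ for $0\le\beta\le1$. Adding the two integral estimates yields
$$
\int_{\theta_1}^{\theta_2}\operatorname{Re}\!\left(1+\frac{z\,C_\beta[f]''(z)}{C_\beta[f]'(z)}\right)d\theta
=\int_{\theta_1}^{\theta_2}\operatorname{Re}\!\left(\frac{zf'(z)}{f(z)}+\frac{\beta z}{1-z}\right)d\theta>-\pi-\pi,
$$
which is $-2\pi$, not $-\pi$---so a naive sum is too lossy, and the real subtlety is that the two deficits cannot both be near their extreme simultaneously, or else one has to argue more carefully with the specific function $g=-\log(1-z)$ and use a sharper combined bound. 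The main obstacle, then, is precisely this: one cannot simply add worst cases. I expect the correct route is to apply Kaplan's criterion to $C_\beta[f]=J[f]\oplus(\beta\star g)$ by first checking $J[f]\in\mathcal{C}$ directly (Alexander-type argument: if $f\in\mathcal{C}$ with companion $g=J[\phi]$, then $J[f]$ relates to $f$ by $z(J[f])'=f$, and close-to-convexity is preserved because $(J[f])'/(J[\phi])'=f/\phi$ has the half-plane property from Lemma~\ref{2lemma2.15}), and then showing the Hornich sum with $\beta\star(-\log(1-z))$, where $-\log(1-z)$ is convex, stays close-to-convex for $0\le\beta\le1$ by a single application of Kaplan's inequality to the combined integrand $\operatorname{Re}(zf'/f)+\beta\operatorname{Re}(z/(1-z))$, using that $f\in\mathcal{C}$ gives the average of $\operatorname{Re}(zf'/f)$ over $(\theta_1,\theta_2)$ exceeds $-\pi/(\theta_2-\theta_1)+$ a slack that exactly absorbs the $\beta/2$ loss when $\beta\le1$. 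Pinning down that slack---i.e., the quantitative form of Kaplan's theorem for close-to-convex $f$ that makes the bound come out to exactly $-\pi$ when $\beta=1$---is the crux, and I would model it on the averaged computation displayed after Theorem~\ref{2theorem2.7}.
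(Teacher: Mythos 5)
There is a genuine gap. Your Kaplan-based route stalls at exactly the point you identify, and the ``slack'' you hope for does not exist: for $f\in\mathcal{C}$ the best arc-wise bound available is $\int_{\theta_1}^{\theta_2}{\rm Re}\,(zf'/f)\,d\theta>-\pi$ (this is just Kaplan's condition for $J[f]$, since $1+z(J[f])''/(J[f])'=zf'/f$), and this bound is sharp over the class; likewise $\int_{\theta_1}^{\theta_2}{\rm Re}\,\big(\beta z/(1-z)\big)\,d\theta$ can come arbitrarily close to $-\beta\pi$ on short arcs near $z=1$. Since the two worst cases are realized by different functions $f$ but nothing in your argument couples them, the additive estimate genuinely bottoms out at $-2\pi$ when $\beta=1$, and no quantitative refinement of Kaplan's theorem of the kind you describe rescues it. The averaged computation after Theorem~\ref{2theorem2.7} succeeds only because there $f$ is starlike of order $\lambda$, which gives a \emph{pointwise} lower bound ${\rm Re}\,(zf'/f)>\lambda$; for merely close-to-convex $f$ no pointwise bound is available, which is why the same trick does not transfer.

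The paper's proof sidesteps Kaplan entirely and is much shorter: take the convex companion $\psi\in\mathcal{K}$ of $f$ with ${\rm Re}\,(e^{i\alpha}f'/\psi')>0$, and use $g:=C_\beta[\psi]$ as the companion for $C_\beta[f]$. By Theorem~\ref{2theorem2.13}, $g\in\mathcal{K}$ for $0\le\beta\le1$ (the term $\beta z/(1-z)$ is absorbed into the new companion rather than estimated separately). The factor $z(1-z)^{\beta}$ cancels in the ratio, so
$$
\frac{C_\beta[f]'(z)}{g'(z)}=\frac{f(z)}{\psi(z)},
$$
and Lemma~\ref{2lemma2.15}, applied to the pair $(f,\psi)$ with $\psi\in\mathcal{K}\subset\mathcal{S}^*$, transfers the half-plane condition from $e^{i\alpha}f'/\psi'$ to $e^{i\alpha}f/\psi$, giving ${\rm Re}\,\big(e^{i\alpha}C_\beta[f]'/g'\big)>0$ and hence $C_\beta[f]\in\mathcal{C}$. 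You had both needed ingredients in hand (Theorem~\ref{2theorem2.13} and Lemma~\ref{2lemma2.15}) but deployed them toward an integral inequality instead of toward constructing the new convex companion; that construction is the missing idea.
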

\begin{proof}
Since $f\in\mathcal{C}$, by its definition there exists a function $\psi\in\mathcal{K}$ 
and $\alpha\in(-\pi/2,\pi/2)$
such that ${\rm Re}\,(e^{i\alpha}f'/\psi')>0$, for $z\in \mathbb{D}$. If $\beta \in [0,1]$, we set
$$
g(z)=\int_{0}^{z} \frac{\psi(w)}{w(1-w)^\beta}dw. 
$$
Then in view of Theorem \ref{2theorem2.13}, $g$ is convex for $0\leq\beta\leq 1$. 
Now we compute and see by using Lemma \ref{2lemma2.15} that
$$
{\rm Re}\,\bigg \lbrace e^{i\alpha}\frac{C_\beta[f]'(z)}{g'(z)}\bigg\rbrace
={\rm Re}\,\bigg\lbrace e^{i\alpha}\frac{f(z)}{\psi(z)}\bigg\rbrace>0
$$
for $z\in\mathbb{D}$. This gives that $C_\beta[f]\in\mathcal{C}$, completing the proof. 
\end{proof}

\begin{remark}
If we choose $\beta>1$ in Theorem~\ref{2theorem2.17}, then the result may not hold 
as can be seen from Example \ref{2example2.8} that the $\beta$-Ces\`aro transform 
$C_\beta$ of the Koebe function is not univalent in $\mathbb{D}$ and hence not close-to-convex.
\end{remark}

Our next result shows that there is a function $f\in \mathcal{S}$ such that 
its $\beta$-Ces\`{a}ro transform is not univalent in $\mathbb{D}$.

\begin{theorem}
There exists a function $f\in \mathcal{S}$ such that $C_\beta[f]$ does not belong 
to $\mathcal{S}$ for $\beta\ge 0$.
\end{theorem}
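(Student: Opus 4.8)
The plan is to split the argument according to the size of $\beta$, the only substantial case being $0<\beta\le 1$. For $\beta=0$ the operator $C_0$ is the Alexander transform $J$, and it is classical that $J(\mathcal S)\not\subset\mathcal S$ (see \cite[\S 8.4]{Duren83}); any witnessing function serves. For $\beta>1$ the Koebe function $k(z)=z/(1-z)^2\in\mathcal S$ already works: exactly as in Example~\ref{2example2.8} with $\lambda=0$ one computes $C_\beta[k](z)=\frac{1}{\beta+1}\big[(1-z)^{-(\beta+1)}-1\big]$, and by W.~C.~Royster's lemma \cite[p.~386]{Royster65} this is univalent in $\mathbb D$ only when $\beta+1\le 2$, so $C_\beta[k]\notin\mathcal S$ for every $\beta>1$.

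It remains to treat $0<\beta\le 1$, with $\beta=1$ the classical Ces\`aro transform. The obstruction to an easy argument is that none of the standard subclasses of $\mathcal S$ can furnish the example: if $f\in\mathcal S^*(\lambda)$ with $\lambda\ge 0$, then combining Lemma~\ref{2lemma2.6} (with $\alpha=0$) and Lemma~\ref{2lemma2.2} gives $C_\beta[f]\in\mathcal K(\lambda-\beta/2)\subset\mathcal S$ for all $\beta\le 2\lambda+1$, in particular for $\beta\le 1$, while $C_\beta(\mathcal C)\subset\mathcal S$ for $0\le\beta\le 1$ by Theorem~\ref{2theorem2.17}; so the required $f$ must lie in $\mathcal S\setminus\mathcal C$. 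For $\beta=1$ such an $f$ was exhibited by F.~W.~Hartmann and T.~H.~MacGregor in \cite{Hartmann74}; the plan is to use their construction — a conformal map onto a suitably shaped slit domain, equivalently a spirallike-type univalent function — as a template and to $\beta$-deform it, so that the extra weight $(1-w)^{-\beta}$ in $C_\beta[f](z)=\int_0^z\frac{f(w)}{w}(1-w)^{-\beta}\,dw$ still forces two distinct points $z_1\ne z_2$ to satisfy $C_\beta[f](z_1)=C_\beta[f](z_2)$. Since $f(w)/w$ never vanishes for $f\in\mathcal S$, the map $C_\beta[f]$ is automatically locally univalent, so this coincidence of values cannot come from a critical point and must be produced genuinely; in the absence of a closed form I would obtain it by a Rouch\'e/Hurwitz argument perturbing off a limiting non-injective map (for instance, the same argument shows that a witness $f_0$ for $\beta=0$ still satisfies $C_\beta[f_0]\notin\mathcal S$ for all $\beta$ in a neighbourhood of $0$; one may also note that $\{\beta\ge 0:C_\beta(\mathcal S)\subset\mathcal S\}$ is closed, by the Carath\'eodory compactness of $\mathcal S$ and the local uniform convergence $C_{\beta_n}[f]\to C_\beta[f]$).

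I expect the genuine difficulty to be exactly the construction for $0<\beta\le 1$. The growth theorem for $\mathcal S$ prevents the failure of injectivity from arising — as it does when $\beta>1$ — from $f$ growing too fast near the boundary, and the preservation results above rule out every easily described subfamily of $\mathcal S$; one is thus forced to build a truly spirallike-type (or slit-domain) univalent $f$, verifying membership in $\mathcal S$ through an explicit conformal description or Becker's criterion, whose $\beta$-Ces\`aro transform has its derivative winding through total argument more than $2\pi$ along $\partial\mathbb D$, and then to read off from that winding the desired pair of points with a common image.
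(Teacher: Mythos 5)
Your cases $\beta=0$, $\beta>1$, and $\beta=1$ are fine: the Koebe-function computation via Royster's lemma is exactly the paper's Example~\ref{2example2.8} with $\lambda=0$, and citing Hartmann--MacGregor for the Ces\`aro case $\beta=1$ is what the paper does too. The genuine gap is $0<\beta<1$, and there you have only a plan, not a proof. Your observation that the witness must lie in $\mathcal{S}\setminus\mathcal{C}$ is correct, but neither of your proposed mechanisms closes the interval: the Rouch\'e/Hurwitz perturbation off a non-injective limit only yields non-univalence of $C_\beta[f_0]$ for $\beta$ in some uncontrolled neighbourhood of the limiting parameter, and the closedness of $\{\beta\ge 0:\,C_\beta(\mathcal{S})\subset\mathcal{S}\}$ is of no help in showing that a given interior point such as $\beta=1/2$ lies outside that set. ``$\beta$-deforming the Hartmann--MacGregor slit domain'' is not an argument until the deformed function is written down and its two coincident values are exhibited.

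The missing idea is simply to allow a \emph{complex} exponent in the construction you already use for $\beta>1$. The paper takes the single function $f(z)=z(1-z)^{i-1}$, which is univalent (it is, up to rotation, the function $z(1-iz)^{i-1}$ from \cite[p.~257]{Duren83}) but not close-to-convex, and computes $C_\beta[f](z)=\frac{1}{i-\beta}\bigl[1-(1-z)^{i-\beta}\bigr]$. Royster's criterion --- the very lemma you invoke for the Koebe function --- states that $(1-z)^{\mu}$ is univalent in $\mathbb{D}$ iff $\mu\ne 0$ lies in $|\mu+1|\le 1$ or $|\mu-1|\le 1$; with $\mu=i-\beta$ one has $|\mu-1|^2=(1+\beta)^2+1>1$ and $|\mu+1|^2=(1-\beta)^2+1>1$ for every $\beta\ne 1$, so $C_\beta[f]\notin\mathcal{S}$ for all $\beta\ge 0$ with $\beta\ne 1$, uniformly and with no case split at $\beta=1$ other than the citation you already have. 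In short: you identified the right tool (Royster) and the right obstruction (the witness cannot be close-to-convex), but without the explicit spirallike-type example $z(1-z)^{i-1}$ the core case of the theorem remains unproved.
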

\begin{proof}
Consider the function $f(z)=z(1-z)^{i-1}$ for $z\in\mathbb{D}$. We can rewrite $f$ 
in the composition form $f=(-i)(g\circ h)$ with
$$
g(z)=z(1-iz)^{i-1}~~\mbox{ and }~~ h(z)=-iz
$$
for $z\in \mathbb{D}$. As shown in \cite[p. 257]{Duren83}, $g$ is univalent in 
$\mathbb{D}$. Since composition of two univalent functions is univalent $f$ is 
univalent in $\mathbb{D}$.
 
Now, if we calculate $C_\beta[f](z)$, for $f(z)=z(1-z)^{i-1}$, then we obtain
$$
C_\beta[f](z)=\int_{0}^{z}(1-w)^{i-1-\beta} dw = \frac{1}{i-\beta}[1-(1-z)^{i-\beta}].
$$
In 1965, W. C. Royster \cite{Royster65} proved that the function 
$g(z)=\exp[\mu \log (1-z)]$ is univalent in $\mathbb{D}$ if and only if $0\neq \mu$ 
lies in either of the closed disks $|\mu+1|\leq 1$, $|\mu-1|\leq 1$. Using this fact, 
we show that $(1-z)^{i-\beta}$ is not univalent in $\mathbb{D}$ for $\beta\neq 1$. 
It thus follows that $C_\beta[f]$ is not univalent in $\mathbb{D}$ for $\beta\neq 1$. 
The remaing case $\beta=1$ is already handled by F. W. Hartmann and T. H. MacGregor 
in \cite{Hartmann74}.	
\end{proof}

\section{Pre-Schwarzian Norms}
Recall the definition of the pre-Schwarzian norm of a function $f\in \mathfrak{F}$:
$$
\|f\|=\sup_{z\in\mathbb{D}}\,(1-|z|^2)\left|\frac{f''(z)}{f'(z)}\right|.
$$
It is well-known that $\|f\|\le 6$ for $f\in\mathcal{S}$ as well as for $f\in\mathcal{S}^*$.
The sharp estimation $\|f\|\le 4$, for $f\in\mathcal{K}$, was later generalized by 
S. Yamashita to the class $\mathcal{K}(\lambda)$, $0\le \lambda<1$ (see \cite{Yamashita99}). 
Recently, in \cite{Ali18}, S. Yamashita's result has been further extended to
$\mathcal{K}(\lambda)$, $-1/2\le \lambda<1$. However, here we prove that the result of 
Yamashita holds true for all $\lambda<1$.

\begin{theorem}\label{2theorem2.18}
For $\lambda<1$, if  $f\in \mathcal{K}(\lambda)$ then $\|f\|\leq 4(1-\lambda)$ and 
the bound is sharp.
\end{theorem}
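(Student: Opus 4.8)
The plan is to reduce the statement to the classical sharp bound $\|g\|\le 4$ for convex functions by means of the Hornich scalar-multiplication description of $\mathcal{K}(\lambda)$ already used in the proof of Theorem~\ref{2theorem2.1}. Recall from there that $\mathcal{K}(\lambda)=(1-\lambda)\star\mathcal{K}$ for every $\lambda<1$. Hence, given $f\in\mathcal{K}(\lambda)$, there is a convex function $g\in\mathcal{K}$ with $f=I_{1-\lambda}[g]$, that is, $f'(z)=\{g'(z)\}^{1-\lambda}$ with the branch normalised by $\{g'(0)\}^{1-\lambda}=1$. In particular $f'(z)\neq0$, so $f\in\mathfrak{F}$ and its pre-Schwarzian norm is well defined.

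Next I would take the logarithmic derivative of $f'(z)=\{g'(z)\}^{1-\lambda}$ to obtain
$$
\frac{f''(z)}{f'(z)}=(1-\lambda)\,\frac{g''(z)}{g'(z)},\qquad z\in\mathbb{D}.
$$
Since $1-\lambda>0$, multiplying by $1-|z|^2$, taking absolute values, and passing to the supremum gives $\|f\|=(1-\lambda)\|g\|$. The classical sharp estimate $\|g\|\le4$ for $g\in\mathcal{K}$ (recalled just before the theorem, cf.\ \cite{Yamashita99,Ali18}) then yields $\|f\|\le4(1-\lambda)$, which is the asserted bound.

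For sharpness I would exhibit the extremal function explicitly: take $g(z)=z/(1-z)\in\mathcal{K}$, for which $g''(z)/g'(z)=2/(1-z)$ and $(1-|z|^2)\,|g''(z)/g'(z)|=2(1-|z|^2)/|1-z|\to4$ as $z\to1^-$ along the real axis, so $\|g\|=4$. The corresponding $f=I_{1-\lambda}[g]$ satisfies $f'(z)=(1-z)^{-2(1-\lambda)}$; a direct check gives $1+zf''(z)/f'(z)=1+2(1-\lambda)z/(1-z)$, whose real part exceeds $\lambda$ since $\mathrm{Re}\,(z/(1-z))>-1/2$, so $f\in\mathcal{K}(\lambda)$, and $\|f\|=(1-\lambda)\|g\|=4(1-\lambda)$. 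I do not expect any real obstacle here: the only points that require care are invoking the identity $\mathcal{K}(\lambda)=(1-\lambda)\star\mathcal{K}$ in the full range $\lambda<1$ (already done in Theorem~\ref{2theorem2.1}) and citing the known sharp bound for the convex case; the sole gain over the earlier versions is that this argument needs no lower restriction such as $\lambda\ge-1/2$.
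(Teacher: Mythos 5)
Your proposal is correct and follows essentially the same route as the paper: it reduces the statement to the classical bound $\|g\|\le 4$ for $g\in\mathcal{K}$ via the identity $\mathcal{K}(\lambda)=(1-\lambda)\star\mathcal{K}$ and the scaling $\|I_{1-\lambda}[g]\|=(1-\lambda)\|g\|$. The only difference is that you verify sharpness explicitly with $f'(z)=(1-z)^{-2(1-\lambda)}$, whereas the paper asserts it without computation; this is a welcome addition but not a different argument.
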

\begin{proof}
Recall the relation
$$
\mathcal{K}(\lambda)=
(1-\lambda)\star \mathcal{K}=\{(1-\lambda)\star f : f\in \mathcal{K} \}.
$$
Thus, if $f\in \mathcal{K}(\lambda)$, then there exists a function $g\in \mathcal{K}$ such that
$f(z)=(1-\lambda)\star g(z)$. It follows that
$$
\|f\|=(1-\lambda)\|g\|\le 4(1-\lambda),
$$
completing the proof.
\end{proof}

Our purpose in this section is to obtain the pre-Schwarzian norm of the elements 
in $J(\mathcal{S}_{\alpha}^*(\lambda))$, $C_\beta(\mathcal{S}^*(\lambda))$ and in 
$C_\beta(\mathcal{S})$ leading to certain observation highlighted at the end of Section~1.
   
First, as a consequence of Theorem \ref{2theorem2.18}, we obtain a sharp estimate of 
$\|J[f]\|$ for $f\in\mathcal{S}^*_\alpha(\lambda)$. This can be rewritten in the following form.

\begin{theorem}\label{2theorem2.19}
 For each $\alpha\in(-\pi/2,\pi/2)$ and each $\lambda<1$, the sharp inequality 
 $\|f\|\leq 4(1-\lambda)\cos\alpha$ 
 holds for $f\in J(\mathcal{S}_{\alpha}^*(\lambda))$.
\end{theorem}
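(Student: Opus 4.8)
The plan is to reduce the statement to Theorem~\ref{2theorem2.18} by means of the representation of $J(\mathcal{S}_\alpha^*(\lambda))$ supplied by Lemma~\ref{2lemma2.2}. The one auxiliary fact I would isolate first is the elementary transformation rule for the pre-Schwarzian under the Hornich scalar multiplication: if $h\in\mathfrak{F}$ and $g=I_\gamma[h]$, then $g'(z)=\{h'(z)\}^\gamma$, whence $g''(z)/g'(z)=\gamma\,h''(z)/h'(z)$ and therefore $\|g\|=|\gamma|\,\|h\|$. This identity is already implicit in the proofs of Theorems~\ref{2theorem2.1} and \ref{2theorem2.18}, so I would simply record it and use it.

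For the inequality, let $f\in J(\mathcal{S}_\alpha^*(\lambda))$. By Lemma~\ref{2lemma2.2} there exists $k\in\mathcal{K}(\lambda)$ with $f=I_{e^{-i\alpha}\cos\alpha}[k]$. Applying the rule above with $\gamma=e^{-i\alpha}\cos\alpha$, and noting $|e^{-i\alpha}\cos\alpha|=\cos\alpha>0$ because $\alpha\in(-\pi/2,\pi/2)$, I obtain $\|f\|=\cos\alpha\,\|k\|$. Theorem~\ref{2theorem2.18} gives $\|k\|\le 4(1-\lambda)$, and hence $\|f\|\le 4(1-\lambda)\cos\alpha$, which is the asserted bound.

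For sharpness I would exhibit an extremal function. Take $k_0\in\mathcal{K}(\lambda)$ with $k_0'(z)=(1-z)^{-2(1-\lambda)}$, i.e. the function satisfying $1+zk_0''(z)/k_0'(z)=(1+(1-2\lambda)z)/(1-z)$; evaluating $(1-|z|^2)\,|k_0''(z)/k_0'(z)|$ at $z=r\in(0,1)$ yields $2(1-\lambda)(1+r)\to 4(1-\lambda)$ as $r\to 1^-$, so $\|k_0\|=4(1-\lambda)$ (this is the standard extremal for Theorem~\ref{2theorem2.18}). Then $f_0:=I_{e^{-i\alpha}\cos\alpha}[k_0]\in J(\mathcal{S}_\alpha^*(\lambda))$ satisfies $\|f_0\|=\cos\alpha\,\|k_0\|=4(1-\lambda)\cos\alpha$, so the constant cannot be lowered.

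Since each step is a direct substitution, I do not expect a genuine obstacle here; the statement is essentially a corollary of Lemma~\ref{2lemma2.2} and Theorem~\ref{2theorem2.18}. The only points requiring a line of care are keeping track of the modulus so that the factor $\cos\alpha$ (rather than $e^{-i\alpha}\cos\alpha$) appears, and citing the scaling identity $\|I_\gamma[h]\|=|\gamma|\,\|h\|$ explicitly.
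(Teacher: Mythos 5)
Your proposal is correct and follows essentially the same route as the paper: reduce to Theorem~\ref{2theorem2.18} via the representation $f=I_{e^{-i\alpha}\cos\alpha}[k]$ from Lemma~\ref{2lemma2.2}, use the scaling identity $\|I_\gamma[h]\|=|\gamma|\,\|h\|$, and verify sharpness with the same extremal function $k_0'(z)=(1-z)^{-2(1-\lambda)}$ that the paper uses. No gaps.
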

\begin{proof}
It is easy to calculate that $\|I_\gamma(f)\|=|\gamma|\|f\|$. 
Secondly, By Lemma \ref{2lemma2.2} for $f \in J(\mathcal{S}_{\alpha}^*(\lambda))$ 
there exists a function $k \in \mathcal{K}(\lambda)$ such that 
$f=I_{e^{-i\alpha}\cos\alpha}[k]$. It concludes that 
$\|f\|=|\cos \alpha|\|k\|\leq4(1-\lambda)\cos \alpha$. 

It is evident that the equality holds for the function 
$$
h(z)=\frac{1}{(1-2\lambda)}\Bigg[\frac{1}{(1-z)^{(1-2\lambda)}} - 1\Bigg].
$$
belonging to the class $\mathcal{K}(\lambda)$.
Indeed, we have
$$
\lim_{t\to 1^-} \,(1-t^2)\left|\frac{h''(t)}{h'(t)}\right|
=\lim_{t\to 1^{-}} \,[2(1+t)(1-\lambda)]=4(1-\lambda).
$$
It is easy to compute that the function $g_\alpha\in \mathcal{S}^*_\alpha(\lambda)$ 
corresponding to the function $h(z)$ is given by 
\begin{equation}\label{3eq1}
g_\alpha(z)=z(1-z)^{2(\lambda-1)e^{-i\alpha}\cos\alpha}.
\end{equation}
This completes the proof.
\end{proof}

We remark that if we choose $\lambda=0$ in Theorem \ref{2theorem2.19}, then it 
reduces to  \cite[Proposition~6]{Kim07}.

\begin{remark}
It is well-known that 
for each $\lambda<0$, the class  
$\mathcal{S}^*(\lambda)$ is not contained in the class $\mathcal{S}$ 
(see for instance \cite[p.~66]{MM-Book}). However, here we provide an alternate 
method to show this. As we computed above, 
$\|J[g_0]\|=4(1-\lambda)$ for $g_0\in\mathcal{S}^*(\lambda)$ defined by \eqref{3eq1}. 
For $\lambda<0$, it is clear that $4<\|J[g_0]\|$. Thus, by  
\cite[Theorem 1.1]{Kimponnusamy04}, we conclude that $g_0\not\in\mathcal{S}$. 
\end{remark}

The next theorem obtains the pre-Schwarzian norm estimate of the elements 
in the image set of the $\beta$-Ces\`{a}ro transform of functions from the 
class $\mathcal{S}^*_\alpha(\lambda)$.

\begin{theorem}\label{2theorem2.23}
Let $-\pi/2<\alpha<\pi/2$, $\beta\ge 0$ and $\lambda<1$. 
If $f\in C_\beta(\mathcal{S}^*_\alpha(\lambda))$, then 
the sharp inequality $\|f\|\leq 4(1-\lambda)\cos\alpha+2\beta$ 
holds.
\end{theorem}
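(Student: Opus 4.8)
The plan is to reduce the estimate to Theorem~\ref{2theorem2.19} plus one elementary supremum, using the Hornich decomposition $C_\beta[f]=J[f]\oplus(\beta\star g)$ with $g(z)=-\log(1-z)$. First I would check that $C_\beta[f]\in\mathfrak{F}$ for $f\in\mathcal{S}^*_\alpha(\lambda)$: since ${\rm Re}\,(e^{i\alpha}zf'(z)/f(z))>\lambda\cos\alpha$, the quotient $zf'/f$ is analytic in $\mathbb{D}$, so $f(z)/z=\exp\!\int_0^z\frac{wf'(w)/f(w)-1}{w}\,dw$ is analytic and zero-free, whence $C_\beta[f]'(z)=f(z)/\big(z(1-z)^\beta\big)$ never vanishes and the pre-Schwarzian norm is defined. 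Taking the logarithmic derivative then gives
$$
\frac{C_\beta[f]''(z)}{C_\beta[f]'(z)}=\frac{f'(z)}{f(z)}-\frac{1}{z}+\frac{\beta}{1-z}=\frac{J[f]''(z)}{J[f]'(z)}+\frac{\beta}{1-z},
$$
which is precisely the additivity of $h\mapsto h''/h'$ under $\oplus$.

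Next I would apply the triangle inequality pointwise and pass to the supremum:
$$
(1-|z|^2)\left|\frac{C_\beta[f]''(z)}{C_\beta[f]'(z)}\right|\le(1-|z|^2)\left|\frac{J[f]''(z)}{J[f]'(z)}\right|+\beta\,\frac{1-|z|^2}{|1-z|}.
$$
The first summand is at most $\|J[f]\|$, and since $J[f]\in J(\mathcal{S}^*_\alpha(\lambda))$, Theorem~\ref{2theorem2.19} bounds it by $4(1-\lambda)\cos\alpha$. For the second summand I would use $\frac{1-|z|^2}{|1-z|}=(1+|z|)\frac{1-|z|}{|1-z|}\le 1+|z|<2$, so it is strictly less than $2\beta$ (this also records $\|\beta\star g\|=\beta\|g\|=2\beta$). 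Adding the two bounds yields $\|C_\beta[f]\|\le 4(1-\lambda)\cos\alpha+2\beta$.

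For sharpness I would test the extremal of Theorem~\ref{2theorem2.19}, namely $g_\alpha(z)=z(1-z)^{2(\lambda-1)e^{-i\alpha}\cos\alpha}\in\mathcal{S}^*_\alpha(\lambda)$ from \eqref{3eq1}. A direct computation gives $C_\beta[g_\alpha]'(z)=(1-z)^{2(\lambda-1)e^{-i\alpha}\cos\alpha-\beta}$, hence
$$
\frac{C_\beta[g_\alpha]''(z)}{C_\beta[g_\alpha]'(z)}=\frac{\beta+2(1-\lambda)e^{-i\alpha}\cos\alpha}{1-z},
$$
so that $(1-t^2)\big|C_\beta[g_\alpha]''(t)/C_\beta[g_\alpha]'(t)\big|\to 2\,\big|\beta+2(1-\lambda)e^{-i\alpha}\cos\alpha\big|$ as $t\to1^-$. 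In particular the two constants in the bound are pinned down by the boundary cases $\beta=0$, where the statement is exactly Theorem~\ref{2theorem2.19}, and $\alpha=0$, where $g_0(z)=z/(1-z)^{2-2\lambda}$ gives $C_\beta[g_0]''/C_\beta[g_0]'=(2(1-\lambda)+\beta)/(1-z)$ and hence the value $4(1-\lambda)+2\beta$ exactly.

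The upper bound is routine; the main obstacle is the sharpness. For the triangle inequality above to be asymptotically an equality one needs a sequence $z_n\to1$ along which simultaneously $(1-|z_n|^2)\big|J[f]''/J[f]'\big|(z_n)\to4(1-\lambda)\cos\alpha$, $\frac{1-|z_n|^2}{|1-z_n|}\to2$, and $\arg\big(J[f]''/J[f]'\big)(z_n)-\arg\frac{\beta}{1-z_n}\to0$; reconciling the spiral twist $e^{-i\alpha}$ carried by the $J$-extremal with the nearly radial approach to $z=1$ forced by the $\beta$-term is the delicate point, and the extremal family in $\mathcal{S}^*_\alpha(\lambda)$ must be chosen with this alignment in mind.
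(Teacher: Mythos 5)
Your proof of the inequality $\|C_\beta[f]\|\le 4(1-\lambda)\cos\alpha+2\beta$ is correct and is in substance the paper's own argument: additivity of $h\mapsto h''/h'$ under the Hornich decomposition, the elementary bound $(1-|z|^2)/|1-z|\le 1+|z|<2$, and Theorem~\ref{2theorem2.19}. The one thing you leave open is the sharpness when $\alpha\neq0$ \emph{and} $\beta>0$, and here your computation is in fact more careful than the paper's. The paper tests the same function $g_\alpha$ from \eqref{3eq1} and asserts that the radial limit equals $(1+t)\bigl[2(1-\lambda)\cos\alpha+\beta\bigr]$, which amounts to silently replacing $\bigl|2(1-\lambda)e^{-i\alpha}\cos\alpha+\beta\bigr|$ by $2(1-\lambda)\cos\alpha+\beta$; as your computation shows, the true limit is $2\bigl|\beta+2(1-\lambda)e^{-i\alpha}\cos\alpha\bigr|$, and the two agree only when $\alpha=0$ or $\beta=0$. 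So the paper's sharpness proof establishes exactly the two boundary cases you verified and nothing more.

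Moreover, the alignment obstruction you flag at the end is not merely a defect of this particular extremal: it cannot be repaired. For $f\in\mathcal{S}^*_\alpha(\lambda)$ put $P(z)=\bigl(e^{i\alpha}zf'(z)/f(z)-\lambda\cos\alpha-i\sin\alpha\bigr)/\bigl((1-\lambda)\cos\alpha\bigr)$, so that $P$ is a Carath\'eodory function with $P(0)=1$ and $J[f]''/J[f]'=e^{-i\alpha}(1-\lambda)\cos\alpha\,(P(z)-1)/z$. To have $(1-|z_n|^2)\beta/|1-z_n|\to2\beta$ you must take $z_n\to1$ with $(1-|z_n|)/|1-z_n|\to1$, hence $\arg\bigl(\beta/(1-z_n)\bigr)\to0$; on the other hand the Schwarz--Pick inclusion $P(z)\in\overline{D}\bigl((1+|z|^2)/(1-|z|^2),\,2|z|/(1-|z|^2)\bigr)$ shows that $|P(z_n)-1|$ can be asymptotically maximal only if $\arg(P(z_n)-1)\to0$, so the $J$-term then carries argument tending to $-\alpha$. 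Maximizing over the Schwarz--Pick disk along the radius gives the limiting value $2(1-\lambda)\cos\alpha+\bigl|2(1-\lambda)e^{-i\alpha}\cos\alpha+2\beta\bigr|$, which is strictly less than $4(1-\lambda)\cos\alpha+2\beta$ for $\alpha\neq0$, $\beta>0$ (and near any boundary point other than $1$ the $\beta$-term contributes nothing). So the gap in your proposal is not one you could have closed: the constant $4(1-\lambda)\cos\alpha+2\beta$ is a valid upper bound but is attained in the limit only in the cases $\alpha=0$ or $\beta=0$, precisely the ones your extremal computation detects.
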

\begin{proof}
We observe from the definition of the $\beta$-Ces\`aro transform that the 
inequality $\|C_\beta[f]\|\leq\|J[f]\|+2\beta$ holds for any $f\in\mathfrak{F}$. 
To complete the proof, we recall from Theorem \ref{2theorem2.19} that 
$\|J[f]\|\leq 4(1-\lambda)\cos\alpha$ for $f\in\mathcal{S}^*_\alpha(\lambda)$. It thus concludes that 
$\|C_\beta[f]\|\leq 4(1-\lambda)\cos\alpha+2\beta$ for $f\in \mathcal{S}^*_\alpha(\lambda)$.

For the sharpness, let us consider the function $g_\alpha$ defined by \eqref{3eq1} and we see that 
$$
\lim_{t\to 1^-}(1-t^2)\left|\frac{C_\beta[g_\alpha]''(t)}{C_\beta[g_\alpha]'(t)}\right|
=\lim_{t\to 1^-}(1+t)[2(1-\lambda)\cos\alpha+\beta]
=4(1-\lambda)\cos\alpha+2\beta,
$$
completing the proof.
\end{proof}

A similar technique that is adopted in Theorem~\ref{2theorem2.23} further leads to 
the norm estimate of the $\beta$-Ces\`aro transform of functions $f\in\mathcal{S}$, 
which is presented below.

\begin{theorem}
The sharp inequality $\|f\|\leq 4+2\beta$ 
holds for $f\in C_\beta(\mathcal{S})$.
\end{theorem}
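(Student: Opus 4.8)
The plan is to run the proof of Theorem~\ref{2theorem2.23} almost verbatim, with the sharp bound $\|J[f]\|\le 4$ for $f\in\mathcal S$ --- the estimate \cite[Theorem~1.1]{Kimponnusamy04} already invoked in the Remark following Theorem~\ref{2theorem2.19} --- taking over the role of Theorem~\ref{2theorem2.19}. From $C_\beta[f]'(z)=f(z)/(z(1-z)^{\beta})$ one gets
$$\frac{C_\beta[f]''(z)}{C_\beta[f]'(z)}=\frac{J[f]''(z)}{J[f]'(z)}+\frac{\beta}{1-z},$$
and since $|1-z|\ge 1-|z|$ the last term contributes at most $(1-|z|^2)\beta/|1-z|\le\beta(1+|z|)\le 2\beta$ to the pre-Schwarzian norm; this is exactly the inequality $\|C_\beta[f]\|\le\|J[f]\|+2\beta$ recorded in the proof of Theorem~\ref{2theorem2.23}. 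Combined with $\|J[f]\|\le 4$ it yields $\|C_\beta[f]\|\le 4+2\beta$ for every $f\in\mathcal S$.

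If instead one wants a self-contained derivation of the bound on $\|J[f]\|$: since $J[f]'(z)=f(z)/z$ we have $\frac{J[f]''(z)}{J[f]'(z)}=\frac1z\left(\frac{zf'(z)}{f(z)}-1\right)$, hence $\|J[f]\|=\sup_{z\neq 0}\frac{1-|z|^2}{|z|}\left|\frac{zf'(z)}{f(z)}-1\right|$, and the sharp estimate $\left|\frac{zf'(z)}{f(z)}-1\right|\le\frac{2|z|}{1-|z|}$ (valid for all $f\in\mathcal S$) gives $\|J[f]\|\le\sup_{0<r<1}\frac{1-r^2}{r}\cdot\frac{2r}{1-r}=\sup_{0<r<1}2(1+r)=4$. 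For starlike $f$ this pointwise estimate is elementary, but on all of $\mathcal S$ it is not: it comes from Goluzin's inequality $\left|\log\frac{zf'(z)}{f(z)}\right|\le\log\frac{1+|z|}{1-|z|}$ together with $|e^{w}-1|\le e^{|w|}-1$, whereas the naive growth/distortion theorems only confine $zf'(z)/f(z)$ to a disc of radius $\left(\frac{1+|z|}{1-|z|}\right)^{3}$, which is useless near the origin. This passage from $\mathcal S$ down to an $\mathcal S^{*}$-type estimate is the one step that is not pure bookkeeping; I expect it to be the main obstacle.

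Sharpness is witnessed by the Koebe function $f(z)=z/(1-z)^{2}\in\mathcal S$: then $C_\beta[f](z)=\int_{0}^{z}(1-w)^{-(2+\beta)}\,dw$, so $C_\beta[f]'(z)=(1-z)^{-(2+\beta)}$ and $C_\beta[f]''(z)/C_\beta[f]'(z)=(2+\beta)/(1-z)$, whence
$$\lim_{t\to1^{-}}(1-t^{2})\left|\frac{C_\beta[f]''(t)}{C_\beta[f]'(t)}\right|=\lim_{t\to1^{-}}(1+t)(2+\beta)=4+2\beta,$$
so no constant smaller than $4+2\beta$ is admissible. Combining the upper bound with this example completes the proof that $\|f\|\le 4+2\beta$ for $f\in C_\beta(\mathcal S)$, and that the inequality is sharp.
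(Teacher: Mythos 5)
Your proof is correct and follows essentially the same route as the paper: the inequality $\|C_\beta[f]\|\le\|J[f]\|+2\beta$, the bound $\|J[f]\|\le 4$ for $f\in\mathcal S$ from Kim--Ponnusamy--Sugawa, and sharpness via the Koebe function. The extra self-contained derivation of $\|J[f]\|\le 4$ from Goluzin's inequality is a valid bonus, but it is not needed and does not change the structure of the argument.
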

\begin{proof}
As explained in the proof of Theorem~\ref{2theorem2.23}, we have
$\|C_\beta[f]\|\leq\|J[f]\|+2\beta$ for any $f\in \mathfrak{F}$. Also, we recall from  
\cite[Theorem~1.1]{Kimponnusamy04} that $\|J[f]\|\leq 4$ for $f\in \mathcal{S}$. 
Then we conclude that $\|C_\beta[f]\|\leq 4+2\beta$ for $f\in \mathcal{S}$. 

This is sharp as we can see from the sharpness part of the proof of Theorem 
\ref{2theorem2.23} by considering cases $\lambda=0$ and $\alpha=0$ (i.e. by considering 
the Koebe function).
\end{proof}

\end{document}